\DeclareMathRadical{\sqrtsign}{symbols}{"70}{largesymbols}{"70}
\providecommand{\abs}[1]{\lvert#1\rvert}
\newlength{\figboxwidth}             
\newcommand{\infinity}{\infty}
\def\@ifundefined#1#2#3%
\theoremstyle{plain} 
\newtheorem{theorem}{Theorem}[section]
\newtheorem{proposition}[theorem]{Proposition}
\newtheorem{lemma}[theorem]{Lemma}
\newtheorem{corollary}[theorem]{Corollary}
\theoremstyle{definition} 
\newtheorem{definition}[theorem]{Definition}
\mathchardef\GG="321D
\newcommand{\mcc}[1]{{}}
\numberwithin{equation}{section}
\title[Concentration of dimension in the Lagrange spectrum]
{Concentration of dimension in extremal points of left-half lines in the Lagrange spectrum}
\author{Carlos Gustavo Moreira}
\address{Carlos Gustavo Moreira: SUSTech International Center for Mathematics, Shenzhen, Guangdong, People’s Republic of China and IMPA, Estrada Dona Castorina 110, 22460-320, Rio de Janeiro, Brazil
}
\email{gugu@impa.br}
\thanks{The first author was partially supported by CNPq and FAPERJ}
\author{Christian Villamil}
\address{Christian Villamil: IMPA, Estrada Dona Castorina 110, 22460-320, Rio de Janeiro, Brazil}
\email{ccsilvav@impa.br}
\thanks{The second author was partially supported by CNPq and Viana's Louis D. prize}
\keywords{Hausdorff dimension, horseshoes, Lagrange spectrum, surface diffeomorphisms}
\date{\today}
\begin{document}

\begin{abstract}
We prove that for any $\eta$ that belongs to the closure of the interior of the Markov and Lagrange spectra, the sets  $k^{-1}((-\infty,\eta])$ and $k^{-1}(\eta)$, which are the sets of irrational numbers with best constant of Diophantine approximation bounded by $\eta$ and exactly $\eta$ respectively, have the same Hausdorff dimension. We also show that, as $\eta$ varies in the interior of the spectra, this Hausdorff dimension is a strictly increasing function.
\end{abstract}

\maketitle

\tableofcontents

\section{Introduction}
The classical Lagrange and Markov spectra are closed subsets of the real line related to Diophantine approximations. They arise naturally in the study of rational approximations of irrational numbers and of indefinite binary quadratic forms, respectively. 

Given $\alpha\in\mathbb R\setminus\mathbb Q$, set
\begin{eqnarray*}
	k(\alpha)&=&\sup \left\{k>0:\left|\alpha -\frac{p}{q}\right|<\frac{1}{kq^2} \ \mbox{has infinitely many rational solution} \ \frac{p}{q}\right \}\\&
	=&\limsup_{p\in \mathbb{Z},q\in \mathbb{N}, p,q\to \infty}|q(q\alpha-p)|^{-1}\in \mathbb{R}\cup \{\infty\}
\end{eqnarray*}
for the best constant of Diophantine approximations of $\alpha$.

The {\it classical Lagrange spectrum} is the set 
$$L=\{k(\alpha): \alpha \in \mathbb{R}\setminus\mathbb{Q}, k(\alpha)<\infty\},$$
and the {\it classical Markov spectrum} is the set 
$$M=\left\{\left(\inf\limits_{(x,y)\in\mathbb{Z}^2\setminus\{(0,0)\}} |q(x,y)|\right)^{-1} < \infty: q(x,y)=ax^2+bxy+cy^2, b^2-4ac=1\right\} $$
that consists of the reciprocal of the minimal values over non-trivial integer vectors $(x,y)\in\mathbb{Z}^2\setminus\{(0,0)\}$ of indefinite binary quadratic forms $q(x,y)$ with real coefficients and unit discriminant. 

Perron gave in \cite{P} the following dynamical characterizations of these classical spectra in terms of symbolic dynamical systems: 
Given a bi-infinite sequence $\theta=(\theta_n)_{n\in\mathbb{Z}}\in(\mathbb{N}^*)^{\mathbb{Z}}$, let 
$$\lambda_i(\theta):=[0;a_{i+1},a_{i+2},\dots]+a_i+[0;a_{i-1}, a_{i-2},\dots].$$
If the Markov value $m(\theta)$ of $\theta$ is $m(\theta):=\sup\limits_{i\in\mathbb{Z}} \lambda_i(\theta)$ and the Lagrange value $\ell(\theta)$ is $\ell(\theta):=\limsup \limits_{i\to \infty} \lambda_i(\theta).$ Then the Markov spectrum is the set 
$$M=\{m(\theta)<\infty: \theta\in(\mathbb{N}^*)^{\mathbb{Z}}\}$$
 and the Lagrange spectrum is the set
 $$L=\{\ell(\theta)<\infty: \theta\in(\mathbb{N}^*)^{\mathbb{Z}}\}.$$
 It follows from these characterizations that $M$ and $L$ are closed subsets of $\mathbb R$ and that $L\subset M$.

Markov showed in \cite{M79} that 
$$L\cap (-\infty, 3)=M\cap (-\infty, 3)=\{k_1=\sqrt{5}<k_2=2\sqrt{2}<k_3=\frac{\sqrt{221}}{5}<...\},$$
where $k^2_n\in \mathbb{Q}$ for every $n\in \mathbb{N}$ and $k_n\to 3$ when $n\to \infty$.

M. Hall proved in \cite{Hall} that 
$$C_4+C_4=[\sqrt{2}-1,4(\sqrt{2}-1)],$$
where for each positive integer $N$, $C_N$ is the set of numbers in $[0,1]$ in whose continued fractions the coefficients are bounded by $N$, i.e., $C_N=\{x=[0;a_1,...,a_n,...]\in [0,1]: a_i\le N, \ \forall i\ge 1\}.$ Together with Perron characterizations, this implies that $L$ and $M$ contain the whole half-line $[6,+\infty)$.

Freiman in \cite{F} determined the precise beginning of Hall's ray (the biggest half-line contained in $L$), which is 
$$c_F:=\frac{2221564096 + 283748\sqrt{462}}{491993569}=4.52782956616\dots$$ 

The first author in \cite{M3} proved several results on the geometry of the Markov and Lagrange spectra, for example that the map $d:\mathbb{R} \rightarrow [0,1]$, given by
$$
d(\eta)=HD(L\cap(-\infty,\eta))= HD(M\cap(-\infty,\eta))
$$
is continuous, surjective and such that $d(3)=0$ and $d(\sqrt{12})=1$. Moreover, that
$$d(\eta)=\min \{1,2D(\eta)\},$$
where $D(\eta)=HD(k^{-1}(-\infty,\eta))=HD(k^{-1}(-\infty,\eta])$ is a continuous surjective function from $\mathbb{R}$ to $[0,1).$ And also, the limit
$$\lim_{\eta\rightarrow \infty}HD(k^{-1}(\eta))=1.$$

Recently in \cite{MMPV} was given the estimate 
$$t^*_1:=\sup \{s\in \mathbb{R}:d(s)<1\}=3.334384...$$
In particular, any $t\in \mathbb{R}$ that belongs to the interior of the Markov and Lagrange spectra must satisfy $t>t^*_1.$

Now, let $\varphi:S\rightarrow S$ be a diffeomorphism of a $C^{\infty}$ compact surface $S$ with a mixing horseshoe $\Lambda$ and let $f:S\rightarrow \mathbb{R}$ be a differentiable function. For $x\in S$, following the above characterization of the classical spectra, we define the {\it Lagrange value} of $x$ associated to $f$ and $\varphi$ as being the number $\ell_{\varphi,f}(x)=\limsup_{n\to \infty}f(\varphi^n(x))$ and also the {\it Markov value} of $x$ associated to $f$ and $\varphi$ as the number $m_{\varphi,f}(x)=\sup_{n\in \mathbb{Z}}f(\varphi^n(x))$. 

The sets
$$L_{\varphi,f}(\Lambda)=\{\ell_{\varphi,f}(x):x\in \Lambda\}$$
and
$$M_{\varphi,f}(\Lambda)=\{m_{\varphi,f}(x):x\in \Lambda\}$$
are called {\it Lagrange Spectrum} of $(\varphi,f,\Lambda)$ and {\it Markov Spectrum} of $(\varphi,f, \Lambda)$.

It turns out that dynamical Markov and Lagrange spectra associated to hyperbolic dynamics are closely related to the classical Markov and Lagrange spectra. Several results on the Markov and Lagrange dynamical spectra associated to horseshoes in dimension 2 which are analogous to previously known results on the classical spectra were obtained recently: in \cite{MR} it is shown that typical dynamical spectra associated to horseshoes with Hausdorff dimensions larger than one have non-empty interior (as the classical ones). In \cite{M4} it is shown that typical Markov and Lagrange dynamical spectra associated to horseshoes have the same minimum, which is an isolated point in both spectra and is the image by the function of a periodic point of the horseshoe. 

In \cite{CMM} and \cite{GC1}, in the context of {\it conservative} diffeomorphism it is proven (as a generalization of the results in \cite{CMM}) that for typical choices of the dynamic and of the function, the intersections of the corresponding dynamical Markov and Lagrange spectra with half-lines $(-\infty,t)$ have the same Hausdorff dimensions, and this defines a continuous function of $t$ whose image is $[0,\min \{1,D\}]$, where $D$ is the Hausdorff dimension of the horseshoe. 

For more information and results on classical and dynamical Markov and Lagrange spectra, we refer to the books \cite{CF} and \cite{LMMR}.

In this paper, we use that dynamical Markov and Lagrange spectra associated to conservative horseshoes in surfaces are natural generalizations of the classical Markov and Lagrange spectra. In fact, classical Markov and Lagrange spectra are not compact sets, so they cannot be dynamical spectra associated to horseshoes. However, in \cite{LM2} is showed that, for any $N\ge 2$ with $N\neq 3$, the initial segments of the classical spectra until $\sqrt{N^2+4N}$ (i.e., $M\cap (-\infty,\sqrt{N^2+4N}]$ and $L\cap (-\infty,\sqrt{N^2+4N}]$) coincide with the sets $M(N)$ and $L(N)$, given, in the notation we used in Perron's characterization of $M$ and $L$ by 
$$M(N)=m(\Sigma(N))=\{m(\theta): \theta\in \Sigma(N)\}$$
 and
$$L(N)=\ell(\Sigma(N))=\{\ell(\theta): \theta\in\Sigma(N)\}$$
where $\Sigma(N):=\{1,2,\dots,N \}^{\mathbb{Z}}.$

It is proved also that $M(N)$ and $L(N)$ are dynamical Markov and Lagrange spectra associated to a smooth real function $f$ and to a horseshoe $\Lambda(N)$ defined by a smooth conservative diffeomorphism $\varphi$, and also that they are naturally associated to continued fractions with coefficients bounded by N.

Here we use this relation between classical and dynamical spectra in order to understand better the fractal geometry (Hausdorff dimension) of the preimage of half-lines by the function $k$. We can state our main result as:

\begin{theorem}\label{Teo}
Define $T:=int(L)=int(M)$. For any $\eta\in \overline{T}$, $D(\eta)=HD(k^{-1}(\eta))$ i.e.
$$HD(k^{-1}((-\infty,\eta)))=HD(k^{-1}((-\infty,\eta]))=HD(k^{-1}(\eta)).$$ 
Even more,
\begin{itemize}
    \item if $\eta$ is accumulated from the left by points of $T$, then
    $$D(\eta)>D(t),\ \ \forall t<\eta$$
    \item if $\eta$ is accumulated from the right by points of $T$, then
    $$D(\eta)<D(t),\ \ \forall t>\eta.$$
    
\end{itemize}
In particular, $D|_{X}$ is strictly increasing, where $X$ is $T$ or any interval contained in $\overline{T}$.
\end{theorem}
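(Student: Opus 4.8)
\emph{Setup and the easy half.} Since $\overline T\subseteq\overline L=L$, every $\eta\in\overline T$ is a finite Lagrange value, so there is $\theta^{*}\in(\natls^{*})^{\integers}$ with $\ell(\theta^{*})=\eta$; moreover $\eta>t^{*}_{1}$ forces $D(\eta)\ge HD(\text{something of dimension }1/2)$, so $D(\eta)\ge 1/2>0$. I fix $N$ with $\eta<N-1$, so that every $\alpha\in k^{-1}((-\infty,\eta])$ has all but finitely many continued fraction entries in $\{1,\dots,N\}$; hence, up to countably many bi-Lipschitz copies, the dimensions to be computed are carried by sequences in $\Sigma(N)=\{1,\dots,N\}^{\integers}$, where I may invoke the conservative horseshoe $\Lambda(N)$ of \cite{LM2}. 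The upper bound $HD(k^{-1}(\eta))\le D(\eta)$ is immediate from $k^{-1}(\eta)\subseteq k^{-1}((-\infty,\eta])$ together with $HD(k^{-1}((-\infty,\eta)))=HD(k^{-1}((-\infty,\eta]))=D(\eta)$ of \cite{M3}; the two real points are the reverse inequality and the strict monotonicity.

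\emph{Lower bound for $HD(k^{-1}(\eta))$.} Given $\epsilon>0$ I pick $t<\eta$ with $D(t)>D(\eta)-\epsilon$ (continuity and monotonicity of $D$) and, by the Gauss--Cantor set techniques of \cite{M3}, a finite family $\cA$ of words in $\{1,\dots,N\}$ such that every bi-infinite concatenation of words of $\cA$ — also when preceded or followed by blocks of $1$'s — has Markov value $\le t<\eta$, while $K(\cA)=\{[0;\gamma]:\gamma\in\cA^{\natls}\}$ has $HD(K(\cA))>D(\eta)-\epsilon$. Using $\limsup_{j}\lambda_{j}(\theta^{*})=\eta$, I choose $i_{k}\to\infty$ and windows $Q_{k}=\theta^{*}_{i_{k}-L_{k}}\cdots\theta^{*}_{i_{k}+L_{k}}$ with $\lambda_{i_{k}}(\theta^{*})>\eta-\epsilon_{k}$ and $\lambda_{j}(\theta^{*})\le\eta+\epsilon_{k}$ for each entry $j$ of $Q_{k}$, with $\epsilon_{k}\downarrow 0$; and I choose $R_{k}\uparrow\infty$ with $R_{k}\gg\sum_{j\le k}L_{j}$. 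Splitting each $\underline w\in\cA^{\natls}$ into consecutive $\cA$-blocks $W_{1},W_{2},\dots$ of lengths $R_{1},R_{2},\dots$, I set
\[
\alpha_{\underline w}:=[0;\,W_{1}\,\mathbf{1}^{R_{1}}\,Q_{1}\,\mathbf{1}^{R_{1}}\,W_{2}\,\mathbf{1}^{R_{2}}\,Q_{2}\,\mathbf{1}^{R_{2}}\,W_{3}\,\cdots\,].
\]
A standard estimate using the exponential contraction of continued fractions shows that each index of this sequence lies in a block of $1$'s (where $\lambda\approx\sqrt{5}$), in a $W_{k}$-zone effectively continued by $1$'s (where $\lambda\le t$), or in a $Q_{k}$-zone (where $\lambda\le\eta+\epsilon_{k}+o(1)$), while the index matching $i_{k}$ has $\lambda\ge\eta-2\epsilon_{k}$; hence $k(\alpha_{\underline w})=\limsup_{n}\lambda_{n}=\eta$. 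Since the inserted blocks are sparse ($\sum L_{j}=o(\sum R_{j})$), $\underline w\mapsto\alpha_{\underline w}$ is bi-H\"older with exponents tending to $1$, so $\mathcal K:=\{\alpha_{\underline w}\}\subseteq k^{-1}(\eta)$ has $HD(\mathcal K)=HD(K(\cA))>D(\eta)-\epsilon$. Letting $\epsilon\to 0$ gives $HD(k^{-1}(\eta))=D(\eta)$, which is the first assertion.

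\emph{Strict monotonicity and the main obstacle.} Since $D$ is continuous and non-decreasing, all the remaining assertions follow from the claim
\[
(\star)\qquad a<b,\ a,b\in T\ \Longrightarrow\ D(a)<D(b):
\]
if $\eta$ is accumulated from the left by $T$ and $t<\eta$, one picks $a\in T\cap(t,\eta)$ and $b\in T$ with $a<b<\eta$ and uses $D(t)\le D(a)<D(b)\le D(\eta)$, the right-accumulation case being symmetric, while for an interval $X\subseteq\overline T$ one notes that $X$ meets $T$ and that its non-extreme points lie in $\overline T$. To prove $(\star)$: using $a\in\operatorname{int}(M)$ and density of periodic Markov values in $M$, I fix a periodic $\theta_{0}$ with $a<m(\theta_{0})<b$, of period $p_{0}$ and unstable Lyapunov exponent $\lambda_{0}$, together with a word $B$ chosen to appear both in a near-optimal transitive subhorseshoe of $\{m\le a\}$ and inside $\theta_{0}$. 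For small $\tau>0$ I take an ergodic measure $\mu$ on $\Lambda_{a}:=\{m_{\varphi,f}\le a\}$ with unstable dimension $HD(\mu)>D(a)-\tau$ which is a measure of maximal unstable dimension on a transitive subhorseshoe through $[B]$; crucially $\mu([B])\ge c_{B}$ for a constant $c_{B}>0$ \emph{independent of $\tau$}, because the relevant Gibbs constants converge as such subhorseshoes exhaust $\Lambda_{a}$. Let $\widehat\Lambda\subseteq\Sigma(N)$ be the transitive subhorseshoe obtained by freely concatenating first-return loops to $[B]$ of $\mu$-generic orbits together with one copy of the $\theta_{0}$-loop; using $B$ as a buffer one checks $\widehat\Lambda\subseteq\{m\le b\}$. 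For the measure $\widehat\mu_{p}$ on $\widehat\Lambda$ that at each return to $[B]$ inserts the $\theta_{0}$-loop with probability $p$ and otherwise follows $\mu$, Abramov's formula gives
\[
HD(\widehat\mu_{p})=\frac{H(p)+(1-p)\,h_{\mu}\,\ell_{\mu}}{(1-p)\,\lambda_{\mu}\,\ell_{\mu}+p\,p_{0}\lambda_{0}},\qquad \ell_{\mu}:=1/\mu([B])\le 1/c_{B},
\]
with $H$ the binary entropy; at $p=0$ this equals $HD(\mu)>D(a)-\tau$. Because $H'(0^{+})=+\infty$ while $h_{\mu},\lambda_{\mu},\ell_{\mu},p_{0}\lambda_{0}$ stay in a fixed compact range as $\tau\to 0$ (this is where $\mu([B])\ge c_{B}$ enters), a short computation gives $\sup_{p}HD(\widehat\mu_{p})\ge D(a)-\tau+c_{0}$ for a constant $c_{0}>0$ independent of $\tau$; taking $\tau<c_{0}$ yields $D(b)\ge\sup_{p}HD(\widehat\mu_{p})>D(a)$, which is $(\star)$. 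The genuinely delicate point is precisely this last one — producing a dimension increment bounded away from $0$ \emph{uniformly} in the approximation quality $\tau$, which is what rules out a plateau of $D$; it rests essentially on the infinite slope of the mixing entropy at $0$ and on the uniform mass bound $\mu([B])\ge c_{B}$. The secondary, more routine, difficulty pervades both parts: the Perron-type combinatorics ("complete strings") needed to verify that the inserted buffers of $1$'s (resp.\ the word $B$) really neutralise every junction, so that $\widehat\Lambda\subseteq\{m\le b\}$ and that $k(\alpha_{\underline w})=\eta$ holds exactly rather than merely $k(\alpha_{\underline w})\le\eta$.
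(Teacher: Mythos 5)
Your proposal assumes away, in both halves, precisely the steps that constitute the actual content of the proof. In the lower bound for $HD(k^{-1}(\eta))$ you posit, citing ``Gauss--Cantor techniques of \cite{M3}'', a finite family $\cA$ with $HD(K(\cA))>D(\eta)-\epsilon$ whose bi-infinite concatenations keep Markov value $\le t$ \emph{even when flanked by blocks of $1$'s}, and you splice windows $Q_k$ of an arbitrary $\theta^*$ with $\ell(\theta^*)=\eta$ between $1$-buffers. Neither point is routine. Cutting a sequence after a digit $a_c$ and appending $\overline{1}$ replaces the old tail $[0;a_{c+1},\dots]$ by $[0;\overline{1}]\approx 0.618$, which can \emph{raise} $\lambda_c$ by a definite amount (for instance, cutting right after a digit $3$ whose original continuation began with $3$ raises the value by roughly $0.3$); since such a junction occurs at every window $Q_k$, the limsup of your spliced sequence can strictly exceed $\eta$, so $k(\alpha_{\underline{w}})=\eta$ is not guaranteed, and for some $\theta^*$ no good cut point exists at all. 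Similarly, compatibility of a near-optimal Cantor set with $1$-buffers is exactly the statement that the high-dimensional part of $\Lambda_t$ connects with the fixed point $\xi=\Pi^{-1}(\overline{1})$ below $\eta$; the paper proves this by the covering estimate of Section 3.2 (a subhorseshoe of $P(t,\epsilon)$ not connecting with $\xi$ has unstable dimension $\le 0.49$, hence dimension $<0.99$), and then it never glues raw $1$-blocks to the near-$\eta$ windows: the inserted words $h_n$ are maximal words of the connected subhorseshoe $\Lambda^n$ itself, joined by words admissible in $\Lambda^n$, using Proposition 3.7(3), whose proof is where the hypothesis $t\in T=int(L)$ enters (via $t\in int(\ell_{\varphi,f}(\Lambda_{t+\epsilon/4}))$). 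Tellingly, your argument for the first part never uses $\eta\in\overline{T}$ at all, which signals that the suppressed gluing step is exactly where the hypothesis is needed.

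For strict monotonicity, your claim $(\star)$ rests on (i) a word $B$ common to a near-optimal subhorseshoe of $\{m\le a\}$ and to $\theta_0$ such that inserting $\theta_0$-loops at returns to $[B]$ keeps all Markov values $\le b$ --- neither the existence of such a $B$ nor the control of the values created at the junctions is proved, and this is again the connection problem rather than a verification --- and (ii) the uniform mass bound $\mu([B])\ge c_B$ independent of $\tau$, which you flag but do not establish and which is indispensable: without it the increment degenerates and $D(b)\ge D(a)-\tau+c_0(\tau)$ gives nothing. The paper gets strictness much more cheaply: by construction $\max f|_{\Lambda^n}<t_{n+1}<\max f|_{\Lambda^{n+1}}$, so $\Lambda^n$ is a \emph{proper} subhorseshoe of $\Lambda^{n+1}$, and a proper mixing subhorseshoe has strictly smaller Hausdorff dimension (Lemma 3.8 and Corollary 3.9, after \cite{LM}), whence $D(t)\le\frac12 HD(\Lambda^n)<\frac12 HD(\Lambda^{n+1})\le D(\eta)$. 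If you wish to keep the Abramov-type route you would still have to import the paper's connection machinery to place $\widehat{\Lambda}$ inside $\{m\le b\}$ and then genuinely prove the uniform bound on $\mu([B])$; as written, both halves of the proposal have real gaps at exactly these points.
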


This result in particular solves a question formulated by Y. Bugeaud and Y. Cheung to the first author: they asked whether there is $T\in\mathbb R$ such that $HD(k^{-1}(t))$ is continuous for $t\in [T,+\infty)$. It follows from Theorem 1.1 that $HD(k^{-1}(t))$ is continuous (and strictly increasing) for $t\in [c_F,+\infty)$. Notice that $c_F$ is minimum with this property.

\section{Preliminaries}

\subsection{Continued fractions and regular Cantor sets} 

The continued fraction expansion of an irrational number $\alpha$ is denoted by 
$$\alpha=[a_0;a_1,a_2,\dots] = a_0 + \frac{1}{a_1+\frac{1}{a_2+\frac{1}{\ddots}}},$$ 
so that the Gauss map $G:(0,1)\to[0,1)$, $G(x)=\dfrac{1}{x}-\left\lfloor \dfrac{1}{x}\right\rfloor$ acts on continued fraction expansions by 
$$G([0;a_1,a_2,\dots]) = [0;a_2,\dots].$$ 

For an irrational number $\alpha=\alpha_0 \in (0,1)$, the continued fraction expansion $\alpha=[0;a_1,\dots]$ is recursively obtained by setting $a_n=\lfloor\alpha_n\rfloor$ and $\alpha_{n+1} = \frac{1}{\alpha_n-a_n} = \frac{1}{G^n(\alpha_0)}$. The rational approximations  
$$\frac{p_n}{q_n}:=[0;a_1,\dots,a_n]\in\mathbb{Q}$$ 
of $\alpha$ satisfy the recurrence relations 
\begin{equation}\label{q_n}
    p_n=a_n p_{n-1}+p_{n-2} \ \mbox{and} \ q_n=a_n q_{n-1}+q_{n-2},\ \ n\geq0
\end{equation}
with the convention that $p_{-2}=q_{-1}=0$ and $p_{-1}=q_{-2}=1$. If $0<a_j\leq N$ for all $j$, it follows that

$$ \frac{p_n}{N+1}\leq p_{n-1}\leq p_n \ \mbox{and} \ \frac{q_n}{N+1}\leq q_{n-1}\leq q_n\ \ n\geq1.$$

Given a finite sequence $(a_1,a_2, \dots, a_n)\in (\mathbb{N}^*)^n$, we define 
$$I(a_1,a_2, \dots, a_n)=\{x\in [0,1]: x=[0;a_1,a_2, \dots, a_n,\alpha_{n+1}],\ \alpha_{n+1}\geq 1 \}$$ 
then by \ref{q_n}, $I(a_1,a_2, \dots, a_n)$ is the interval with extremities $[0;a_1,a_2, \dots, a_n]=\frac{p_n}{q_n}$ and $[0;a_1,a_2, \dots, a_n+1]=\frac{p_n+p_{n-1}}{q_n+q_{n-1}}$ and so

$$\abs{I(a_1,a_2, \dots, a_n)}=\left |\frac{p_n}{q_n}-\frac{p_n+p_{n-1}}{q_n+q_{n-1}}\right |=\frac{1}{q_n(q_n+q_{n-1})},$$
because $p_n q_{n-1}-p_{n-1} q_n=(-1)^{n-1}.$

Also, for $(a_0,a_1, \dots, a_n)\in (\mathbb{N}^*)^{n+1}$ we set 
$$I(a_0;a_1, \dots, a_n)=\{x\in [0,1]: x=[a_0;a_1,a_2, \dots, a_n,\alpha_{n+1}],\ \alpha_{n+1}\geq 1 \},$$
clearly as $I(a_0;a_1, \dots, a_n)=a_0+I(a_1,a_2, \dots, a_n)$, we have 
\begin{equation}\label{intervals}
    \abs{I(a_0;a_1, \dots, a_n)}=\abs{I(a_1,a_2, \dots, a_n)}.
\end{equation}

An elementary result for comparing continued fractions is the following lemma

\begin{lemma}\label{leminha}
    Let $\alpha=[a_0;a_1,\dots, a_n, a_{n+1},\dots]$ and $\tilde{\alpha}=[a_0;a_1,\dots, a_n, b_{n+1},\dots]$, then:
    \begin{itemize}
        \item $\abs{\alpha-\tilde{\alpha}}<1/2^{n-1},$ 
        
        \item If $a_{n+1}\neq b_{n+1}$, $\alpha>\tilde{\alpha}$ if and only if $(-1)^{n+1}(a_{n+1}-b_{n+1})>0.$
        
    \end{itemize}
\end{lemma}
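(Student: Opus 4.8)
The plan is to treat the two bullets separately, both via the continued-fraction cylinder machinery recalled above. For the first bullet, I would note that $\alpha$ and $\tilde\alpha$ both have expansions beginning with $a_0;a_1,\dots,a_n$ followed by a tail $\geq 1$, so both lie in the interval $I(a_0;a_1,\dots,a_n)$. Hence
\[
\abs{\alpha-\tilde\alpha}\le\abs{I(a_0;a_1,\dots,a_n)}=\abs{I(a_1,\dots,a_n)}=\frac{1}{q_n(q_n+q_{n-1})},
\]
where $p_j/q_j$ are the convergents of the finite word $(a_1,\dots,a_n)$. Since each partial quotient is a positive integer, \eqref{q_n} gives $q_j\ge q_{j-1}+q_{j-2}\ge 2q_{j-2}$, whence by induction $q_n\ge 2^{\lfloor n/2\rfloor}$; as $q_{n-1}\ge 1$ we get $q_n(q_n+q_{n-1})>q_n^2\ge 2^{2\lfloor n/2\rfloor}\ge 2^{n-1}$, and therefore $\abs{\alpha-\tilde\alpha}<1/2^{n-1}$ (the case $n=0$ being the trivial bound by the length $1$ of $I(a_0)$).

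For the second bullet, write $\alpha_{n+1}=[a_{n+1};a_{n+2},\dots]$ and $\tilde\alpha_{n+1}=[b_{n+1};b_{n+2},\dots]$, so that
\[
\alpha=\frac{p_n\alpha_{n+1}+p_{n-1}}{q_n\alpha_{n+1}+q_{n-1}},\qquad \tilde\alpha=\frac{p_n\tilde\alpha_{n+1}+p_{n-1}}{q_n\tilde\alpha_{n+1}+q_{n-1}},
\]
i.e. $\alpha$ and $\tilde\alpha$ are the images of $\alpha_{n+1},\tilde\alpha_{n+1}>0$ under the Möbius map $T(t)=\frac{p_nt+p_{n-1}}{q_nt+q_{n-1}}$, whose derivative is $\frac{p_nq_{n-1}-p_{n-1}q_n}{(q_nt+q_{n-1})^2}=\frac{(-1)^{n+1}}{(q_nt+q_{n-1})^2}$ since $p_nq_{n-1}-p_{n-1}q_n=(-1)^{n-1}$. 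Thus on $(0,\infty)$ the map $T$ is strictly increasing if $(-1)^{n+1}>0$ and strictly decreasing if $(-1)^{n+1}<0$. On the other hand, since the tails are infinite we have $\alpha_{n+1}\in(a_{n+1},a_{n+1}+1)$ and $\tilde\alpha_{n+1}\in(b_{n+1},b_{n+1}+1)$, and the hypothesis $a_{n+1}\neq b_{n+1}$ makes these open unit intervals disjoint, so $\sgn(\alpha_{n+1}-\tilde\alpha_{n+1})=\sgn(a_{n+1}-b_{n+1})$. Composing, $\sgn(\alpha-\tilde\alpha)=(-1)^{n+1}\sgn(a_{n+1}-b_{n+1})$, which is precisely the assertion that $\alpha>\tilde\alpha$ if and only if $(-1)^{n+1}(a_{n+1}-b_{n+1})>0$.

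I do not expect a genuine obstacle: the statement is elementary and both halves reduce to facts already on the page, namely the cylinder-length formula $\abs{I(a_1,\dots,a_n)}=1/(q_n(q_n+q_{n-1}))$ and the determinant identity $p_nq_{n-1}-p_{n-1}q_n=(-1)^{n-1}$. The only points needing a little attention are obtaining the \emph{strict} inequality in the first bullet (which comes for free from the extra factor $q_n+q_{n-1}>q_n$, or alternatively from the endpoints of $I(a_0;a_1,\dots,a_n)$ being rational while $\alpha,\tilde\alpha$ are not) and, in the second bullet, verifying that $a_{n+1}\neq b_{n+1}$ really does separate $\alpha_{n+1}$ from $\tilde\alpha_{n+1}$ — this is where that hypothesis is used, and it is what allows the sign comparison to be an equivalence rather than just an implication.
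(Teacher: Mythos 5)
Your proof is correct, and since the paper states this lemma without proof (it is offered as an elementary fact about continued fractions), there is nothing to compare against. Both halves of your argument — bounding $\abs{\alpha-\tilde\alpha}$ by the cylinder length $1/(q_n(q_n+q_{n-1}))$ together with $q_n\ge 2^{\lfloor n/2\rfloor}$, and reading off the sign from the monotonicity of the Möbius map $t\mapsto (p_nt+p_{n-1})/(q_nt+q_{n-1})$ via the determinant identity — are the standard ones and are carried out correctly, including the separation $\alpha_{n+1}\in(a_{n+1},a_{n+1}+1)$ that makes the second bullet an equivalence.
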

 The next lemma is from \cite{M3} (see lemma A.1)

 \begin{lemma} \label{gugu1}
     If $a_0,a_1,a_2\dots, a_n, a_{n+1}, \dots$ and $b_{n+1},b_{n+2}, \dots$ are positive integers bounded by $N\in \mathbb{N}$ and $a_{n+1}\neq b_{n+1}$
then
\begin{eqnarray*}
 \abs{[a_0;a_1,a_2\dots, a_n, a_{n+1}, \dots]-[a_0;a_1,a_2\dots,a_n,b_{n+1}, \dots]}
&>& c(N)/q^2_{n-1}  \\ &>& c(N)\abs{I(a_1,a_2, \dots, a_n)}   
\end{eqnarray*}
for some positive constant $c(N).$
 \end{lemma}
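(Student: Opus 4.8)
The plan is to pass to the Möbius form of a continued fraction in terms of its tail. Write $x := [a_{n+1}; a_{n+2}, \dots]$ and $y := [b_{n+1}; b_{n+2}, \dots]$ for the two tails; since all partial quotients lie in $\{1, \dots, N\}$ we have $1 < x, y < N+1$. Recalling the standard identity $[a_0; a_1, \dots, a_n, t] = (t p_n + p_{n-1})/(t q_n + q_{n-1})$, where $p_n/q_n = [a_0; a_1, \dots, a_n]$ and the denominators $q_j$ obey \eqref{q_n} (and hence do not depend on $a_0$, so that $q_n(q_n + q_{n-1}) = \abs{I(a_1,\dots,a_n)}^{-1}$ by \eqref{intervals}), a direct computation using $p_n q_{n-1} - p_{n-1} q_n = (-1)^{n-1}$ gives
$$[a_0; a_1, \dots, a_n, a_{n+1}, \dots] - [a_0; a_1, \dots, a_n, b_{n+1}, \dots] = \frac{(-1)^{n-1}(x-y)}{(x q_n + q_{n-1})(y q_n + q_{n-1})}.$$
It then remains to bound the two factors.

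For the denominator, from $x, y < N+1$ and the estimates $q_{n-1} \le q_n \le (N+1) q_{n-1}$ recorded after \eqref{q_n} we get $x q_n + q_{n-1} < (N+2) q_n \le (N+1)(N+2) q_{n-1}$, so $(x q_n + q_{n-1})(y q_n + q_{n-1}) < (N+1)^2 (N+2)^2 q_{n-1}^2$. For the numerator one needs a uniform lower bound $\abs{x-y} \ge \epsilon(N) > 0$, and this is the only step that uses the hypothesis $a_{n+1} \neq b_{n+1}$ in an essential way: if $\abs{a_{n+1} - b_{n+1}} \ge 2$ then already $\abs{x-y} > \abs{a_{n+1}-b_{n+1}} - 1 \ge 1$, while in the remaining case $\abs{a_{n+1}-b_{n+1}} = 1$, say $a_{n+1} = b_{n+1}+1$, one has $x > a_{n+1} = b_{n+1} + 1$ and $y = b_{n+1} + [0; b_{n+2}, b_{n+3}, \dots]$, whence
$$\abs{x - y} > 1 - [0; b_{n+2}, b_{n+3}, \dots] \ge 1 - [0; 1, N, 1, N, \dots] =: \epsilon(N) > 0,$$
the last inequality because $[0;1,N,1,N,\dots]$ is the supremum of $[0;c_1,c_2,\dots]$ over sequences with $c_i \in \{1,\dots,N\}$ (maximize by taking $c_1$ minimal and the tail minimal, i.e. alternating $1,N,1,N,\dots$). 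Combining the two bounds, the first asserted inequality holds with $c(N) := \epsilon(N)/((N+1)^2 (N+2)^2)$.

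Finally, the second inequality $c(N)/q_{n-1}^2 > c(N)\abs{I(a_1,\dots,a_n)}$ is immediate from $\abs{I(a_1,\dots,a_n)} = 1/(q_n(q_n+q_{n-1})) \le 1/(2 q_{n-1}^2) < 1/q_{n-1}^2$, using $q_n \ge q_{n-1} \ge 1$. Thus the only genuine point, beyond bookkeeping with \eqref{q_n} and the boundedness hypothesis, is the uniform separation of the tails when $\abs{a_{n+1}-b_{n+1}}=1$; it really does require the bound $N$, since without it $[0;b_{n+2},b_{n+3},\dots]$ can be arbitrarily close to $1$ and no positive $\epsilon(N)$ exists.
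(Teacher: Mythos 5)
Your proof is correct; note that the paper does not prove this lemma itself but simply cites Lemma A.1 of \cite{M3}, and your argument (the M\"obius/tail formula $[a_0;a_1,\dots,a_n,t]=(tp_n+p_{n-1})/(tq_n+q_{n-1})$ giving a difference $(x-y)(-1)^{n-1}/((xq_n+q_{n-1})(yq_n+q_{n-1}))$, together with the uniform separation $\abs{x-y}\ge 1-[0;\overline{1,N}]$ of tails with distinct leading digits) is the standard route and is carried out correctly. The only implicit restriction is $n\ge 1$ so that $q_{n-1}\ge 1$ and $q_n\le (N+1)q_{n-1}$ apply, which is likewise implicit in the statement as given.
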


For the sequel, the following application of lemma \ref{leminha} will also be useful 

\begin{lemma}\label{lemao}
Given $R, N\in \mathbb{N}$, let $\beta^1,\beta^2,\beta^3\in \Sigma(N)^+:=\{1,2,\dots,N \}^{\mathbb{N}}$ such that $[0;\beta^1]<[0;\beta^2]<[0;\beta^3]$. If for two sequences $\alpha=(\alpha_n)_{n\in \mathbb{Z}}\ \mbox{and}\ \tilde{\alpha}=(\tilde{\alpha}_n)_{n\in \mathbb{Z}}\ \mbox{in}\ \Sigma(N)$ it is true that  $\alpha_{0},\dots ,\alpha_{2R+1}=\tilde{\alpha}_{0},\dots ,\tilde{\alpha}_{2R+1}$, then, for all $j\leq 2R+1$ we have
\begin{eqnarray*}
    \lambda_0(\sigma^j(\dots,\alpha_{-2},\alpha_{-1};\alpha_{0},\dots ,\alpha_{2R+1},\beta^2))< \max \{m(\dots,\alpha_{-2},\alpha_{-1};\alpha_{0},\dots ,\alpha_{2R+1},\beta^1), \\ m(\dots ,\tilde{\alpha}_{-2},\tilde{\alpha}_{-1};\tilde{\alpha}_{0},\dots ,\tilde{\alpha}_{2R+1},\beta^3)\}+1/2^{R-1}.\end{eqnarray*}
    
\end{lemma}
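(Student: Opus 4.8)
The plan is to reduce everything to the elementary comparison Lemma~\ref{leminha} by carefully tracking which index $j\le 2R+1$ the shift $\sigma^j$ lands on, and then splitting into two cases according to whether the shifted bi-infinite sequence
$$\gamma := \sigma^j(\dots,\alpha_{-2},\alpha_{-1};\alpha_0,\dots,\alpha_{2R+1},\beta^2)$$
"sees" more of the $\alpha$-block to its right or to its left. Write $\gamma = (\dots,\gamma_{-1};\gamma_0,\gamma_1,\dots)$; by definition $\lambda_0(\gamma) = [0;\gamma_1,\gamma_2,\dots] + \gamma_0 + [0;\gamma_{-1},\gamma_{-2},\dots]$. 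Since $j\le 2R+1$, the central coordinate $\gamma_0$ equals $\alpha_j=\tilde\alpha_j$, and on at least one side the continued fraction expansion agrees with that of the corresponding $\alpha$- or $\tilde\alpha$-sequence for the first $\ge R$ digits. First I would make this precise: if $j\le R$ then the forward expansion $[0;\gamma_1,\gamma_2,\dots]=[0;\alpha_{j+1},\dots,\alpha_{2R+1},\beta^2,\dots]$ agrees with $[0;\alpha_{j+1},\dots,\alpha_{2R+1},\beta^1,\dots]$ in at least the first $2R+1-j\ge R$ entries; if $j\ge R+1$ then the backward expansion $[0;\gamma_{-1},\gamma_{-2},\dots]=[0;\alpha_{j-1},\alpha_{j-2},\dots]$ agrees with the corresponding backward expansion of $\tilde\alpha$ in at least the first $j-1\ge R$ entries (indeed in \emph{all} of them for negative indices, since $\alpha$ and $\tilde\alpha$ are only assumed to differ outside the common block — but at minimum the first $R$ agree). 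In either case Lemma~\ref{leminha} gives that replacing $\gamma$ on that side by the appropriate $\alpha$- or $\tilde\alpha$-tail changes $\lambda_0$ by less than $1/2^{R-1}$.

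Next I would handle the side on which $\gamma$ does \emph{not} automatically agree with a full $\alpha$-tail. This is where the hypothesis $[0;\beta^1]<[0;\beta^2]<[0;\beta^3]$ enters. After the change above, we are comparing $\lambda_0$ of a sequence whose "free" tail starts with $\beta^2$ against the Markov values $m(\dots;\alpha_0,\dots,\alpha_{2R+1},\beta^1)$ and $m(\dots;\tilde\alpha_0,\dots,\tilde\alpha_{2R+1},\beta^3)$. The point is that $m(\theta)=\sup_i\lambda_i(\theta)$ dominates $\lambda_j$ for \emph{every} index $j$, so it suffices to bound $\lambda_0(\gamma)$ by $\lambda_{(\text{appropriate index})}$ of one of the two $\beta^1$- or $\beta^3$-sequences, up to the error $1/2^{R-1}$. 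By the monotonicity in Lemma~\ref{leminha} (second bullet), since $\beta^2$ lies strictly between $\beta^1$ and $\beta^3$ in the $[0;\cdot]$ order, the quantity $[0;\dots,\beta^2,\dots]$ is squeezed between $[0;\dots,\beta^1,\dots]$ and $[0;\dots,\beta^3,\dots]$ — the exact direction of the squeeze depends on the parity of the position at which $\beta$ is inserted relative to $\gamma_0$, and this is precisely what dictates whether we compare with the $\beta^1$-term or the $\beta^3$-term. Summing the at most two applications of the $1/2^{R-1}$-estimate (one per side, but only one side is "free", so a single such error suffices — or, to be safe, I would just absorb both into the stated $1/2^{R-1}$ by noting the side that agrees contributes error $0$ from the tail and only the truncation to depth $R$ costs $<1/2^{R-1}$), we obtain the claimed strict inequality with the additive slack $1/2^{R-1}$.

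The main obstacle is bookkeeping: one must correctly identify, as a function of $j$ and the parities involved, which of the two Markov values ($\beta^1$-side or $\beta^3$-side) dominates $\lambda_0(\gamma)$, and make sure the "$<$" (strict) survives — it does because $[0;\beta^1]<[0;\beta^2]<[0;\beta^3]$ are strict and because $\max$ of the two Markov values is compared against a single $\lambda_0$. A clean way to organize this, which I would adopt, is: set $\ell$ to be the position of $\gamma_0$ inside the string $(\alpha_0,\dots,\alpha_{2R+1},\beta^2)$, observe $\ell = 0$ after the shift by construction and the $\beta$-block sits at distance $2R+1-j+1$ to the right; then $\lambda_0(\gamma)$ and $\lambda_{2R+1-j}$ of the $\beta^i$-sequence share their first $\min(j, 2R+2-j)\ge 1$... rather, share enough digits on the relevant side; apply Lemma~\ref{leminha} once on each relevant side with $n\ge R$, and use that the sign condition in that lemma, together with $[0;\beta^1]<[0;\beta^2]<[0;\beta^3]$, forces $\lambda_0(\gamma) < \max\{\,\cdot\,,\,\cdot\,\} + 1/2^{R-1}$. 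No deep input beyond Lemmas~\ref{leminha} and the definitions of $\lambda_i$, $m$ is needed; the proof is purely combinatorial on continued fraction expansions.
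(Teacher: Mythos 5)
Your proposal is correct and follows essentially the same route as the paper's proof: split according to whether $j\leq R+1$ or $R+1<j\leq 2R+1$, use the closeness estimate of Lemma \ref{leminha} against the $\beta^1$-sequence in the first case, and in the second case use the monotonicity (parity) part of Lemma \ref{leminha} to squeeze the $\beta^2$-tail between the $\beta^1$- and $\beta^3$-sequences, with the backward side contributing the single error term $<1/2^{R-1}$. The bookkeeping you flag works out exactly as you anticipate, so no further ideas are needed.
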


\begin{proof}
   It is just an application of lemma \ref{leminha}. Indeed, for $j \leq R+1$
   \begin{eqnarray*}
     \lambda_0(\sigma^j(\dots,\alpha_{-1};\alpha_{0},\dots ,\alpha_{2R+1},\beta^2))< \lambda_0(\sigma^j(\dots,\alpha_{-1};\alpha_{0},\dots ,\alpha_{2R+1},\beta^1))+ 1/2^{R-1} \\ \leq \max \{m(\dots,\alpha_{-1};\alpha_{0},\dots ,\alpha_{2R+1},\beta^1), m(\dots ,\tilde{\alpha}_{-1};\tilde{\alpha}_{0},\dots ,\tilde{\alpha}_{2R+1},\beta^3)\} +1/2^{R-1}.  
   \end{eqnarray*}
For $R+1<j\leq 2R+1$, if $[\alpha_{j};\dots ,\alpha_{2R+1},\beta^2]<[\tilde{\alpha}_{j};\dots ,\tilde{\alpha}_{2R+1},\beta^3]$
\begin{eqnarray*}
    \lambda_0(\sigma^j(\dots,\alpha_{-2},\alpha_{-1};\alpha_{0},\dots ,\alpha_{2R+1},\beta^2))< \lambda_0(\sigma^j(\dots ,\tilde{\alpha}_{-1};\tilde{\alpha}_{0},\dots ,\tilde{\alpha}_{2R+1},\beta^3)) + 1/2^{R}\\  \leq \max \{m(\dots,\alpha_{-1};\alpha_{0},\dots ,\alpha_{2R+1},\beta^1), m(\dots ,\tilde{\alpha}_{-1};\tilde{\alpha}_{0},\dots ,\tilde{\alpha}_{2R+1},\beta^3)\}+1/2^{R}.
\end{eqnarray*}
And for $R+1<j\leq 2R+1$, if $[\alpha_{j};\dots ,\alpha_{2R+1},\beta^2]<[\alpha_{j};\dots ,\alpha_{2R+1},\beta^1]$
\begin{eqnarray*}
   \lambda_0(\sigma^j(\dots,\alpha_{-1};\alpha_{0},\dots ,\alpha_{2R+1},\beta^2))< \lambda_0(\sigma^j(\dots ,\alpha_{-1};\alpha_{0},\dots ,\alpha_{2R+1},\beta^1))\\ \leq \max \{m(\dots,\alpha_{-1};\alpha_{0},\dots ,\alpha_{2R+1},\beta^1), m(\dots ,\tilde{\alpha}_{-1};\tilde{\alpha}_{0},\dots ,\tilde{\alpha}_{2R+1},\beta^1)\}. 
\end{eqnarray*}
Then we have proved the result.
\end{proof}
 We end this subsection with one definition 
\begin{definition}\label{cantor}
	A set $K\subset \mathbb{R}$ is called a $C^{1+\alpha}$-{\it regular Cantor set}, $\alpha > 0$, if there exists a collection $\mathcal{P}=\{I_1,I_2,...,I_r\}$ of compacts intervals and a $C^{1+\alpha}$-expanding map $\psi$, defined in a neighbourhood of $\displaystyle \cup_{1\leq j\leq r}I_j$ such that
	
	\begin{enumerate}
		\item[(i)] $K\subset \cup_{1\leq j\leq r}I_j$ and $\cup_{1\leq j\leq r}\partial I_j\subset K$,
		
		\item[(ii)] For every $1\leq j\leq r$ we have that $\psi(I_j)$ is the convex hull of a union of $I_r$'s, for $l$ sufficiently large $\psi^l(K\cap I_j)=K$ and $$K=\bigcap_{n\geq 0}\psi^{-n}(\bigcup_{1\leq j\leq r}I_j).$$
	\end{enumerate}
More precisely, we also say that the triple $(K, \mathcal{P}, \psi)$ is a $C^{1+\alpha}$-regular Cantor set.
\end{definition}

For example, in our context of sets of continued fractions. Let, as before, $G$ be the Gauss map and $C_N=\{x=[0;a_1,a_2,...]: a_i\le N, \forall i\ge 1\}$. Then, 
	$$C_N=\bigcap_{n\ge 0}G^{-n}(I_N\cup... \cup I_1),$$
where $I_j=[a_j,b_j]$ and $a_j=[0;j,\overline{1,N}]$ and $b_j=[0;j,\overline{N,1}].$ That is, $C_N$ is a regular Cantor set. 

In a similar way, the set $\tilde{C}_N=\{1,2,...,N\}+C_N$ with the map $G^{'}$ given for $a\in \{1,\dots,N\}$ and $x=[0;a_1,a_2,...]\in C_N$ by
$$G^{'}(a+x)=\frac{1}{a+x-\lfloor a+x\rfloor}=\frac{1}{x}=G(x)+ \left\lfloor \frac{1}{x}\right\rfloor=G(x)+a_1(x)$$
is also a regular Cantor set.
\subsection{Results on Dynamical Markov and Lagrange spectra}

Let $\varphi:S\rightarrow S$ be a diffeomorphism of a $C^{\infty}$ compact surface $S$ with a mixing horseshoe $\Lambda$ and let $f:S\rightarrow \mathbb{R}$ be a differentiable function. Fix a Markov partition $\{R_a\}_{a\in \mathcal{A}}$ with sufficiently small diameter consisting of rectangles $R_a \sim I_a^u \times I_a^s$ delimited by compact pieces $I_a^s$, $I_a^u$, of stable and unstable manifolds of certain points of $\Lambda$ (see \cite{PT93} theorem 2, page 172). The set $\mathcal{B}\subset \mathcal{A}^{2}$ of admissible transitions consist of pairs $(a,b)$ such that $\varphi(R_a)\cap R_{b}\neq \emptyset$; so, we can define the transition matrix $B$ by
$$b_{ab}=1 \ \ \text{if} \ \  \varphi(R_a)\cap R_b\neq \emptyset \ \ \text{and}  \ b_{ab}=0 \ \ \ \text{otherwise, for $(a,b)\in \mathcal{A}^{2}$.}$$

Let $\Sigma_{\mathcal{A}}=\left\{\underline{a}=(a_{n})_{n\in \mathbb{Z}}:a_{n}\in \mathcal{A} \ \text{for all} \ n\in \mathbb{Z}\right\}$ and consider the homeomorphism of $\Sigma_{\mathcal{A}}$, the shift, $\sigma:\Sigma_{\mathcal{A}}\to\Sigma_{\mathcal{A}}$ defined by $\sigma(\underline{a})_{n}=a_{n+1}$. Let $\Sigma_{\mathcal{B}}=\left\{\underline{a}\in \Sigma_{\mathcal{A}}:b_{a_{n}a_{n+1}}=1\right\}$, this set is closed and $\sigma$-invariant subspace of $\Sigma_{\mathcal{A}}$. Still denote by $\sigma$ the restriction of $\sigma$ to $\Sigma_{\mathcal{B}}$, the pair $(\Sigma_{\mathcal{B}},\sigma)$ is a subshift of finite type, see \cite{{Shub}} chapter 10. The dynamics of $\varphi$ on $\Lambda$ is topologically conjugate to the sub-shift $\Sigma_{\mathcal{B}}$, namely, there is a homeomorphism $\Pi: \Lambda \to \Sigma_{\mathcal{B}}$ such that $\varphi\circ \Pi=\Pi\circ \sigma$.

Recall that the stable and unstable manifolds of $\Lambda$ can be extended to locally invariant $C^{1+\alpha}$ foliations in a neighborhood of $\Lambda$ for some $\alpha>0$. Using these foliations it is possible define projections $\pi^u_a: R_a\rightarrow \{i^u_a\} \times I^s_a$ and $\pi^s_a: R_a\rightarrow I^u_a \times \{i^s_a\}$ of the rectangles into the connected components $\{i^u_a\} \times I^s_a$ and $I^u_a \times \{i^s_a\}$ of the stable and unstable boundaries of $R_a$, where $i^u_a\in \partial I^u_a$ and $i^s_a\in \partial I^s_a$ are fixed arbitrarily. In this way, we have the unstable and stable Cantor sets
$$K^u=\bigcup_{a\in \mathcal{A}}\pi^s_a(\Lambda\cap R_a) \ \mbox{and} \ K^s=\bigcup_{a\in \mathcal{A}}\pi^u_a(\Lambda\cap R_a).$$

In fact $K^u$ and $K^s$ are $C^{1+\alpha}$ dynamically defined, associated to some expanding maps $\psi_s$ and $\psi_u$ defined in the following way: If $y\in R_{a_1}\cap \varphi(R_{a_0})$ we put
$$\psi_s(\pi^u_{a_1}(y))=\pi^u_{a_0}(\varphi^{-1}(y))$$
and if $z\in R_{a_0}\cap \varphi^{-1}(R_{a_1})$ we put
$$\psi_u(\pi^s_{a_0}(z))=\pi^s_{a_1}(\varphi(z)).$$

The stable and unstable Cantor sets, $K^s$ and $K^u$, respectively, are closely related to the fractal geometry of the horseshoe $\Lambda$. For instance, it is well-known that, 
$$HD(\Lambda)=HD(K^s)+HD(K^u)$$
and that in the conservative case 
$$HD(K^s)=HD(K^u).$$

The study of the intersection of the spectra with half-lines is related to the study of fractal dimensions of the set
$$\Lambda_t=\bigcap\limits_{n\in\mathbb{Z}}\varphi^{-n}(\{y\in\Lambda: f(y)\leq t\}) = \{x\in\Lambda: m_{\varphi, f}(x)=\sup\limits_{n\in\mathbb{Z}}f(\varphi^n(x))\leq t\}$$ 
for $t\in\mathbb{R}$. Following this, we also consider the subsets $\Lambda_t$ through its projections on the stable and unstable Cantor sets of $\Lambda$
$$K^u_t=\bigcup_{a\in \mathcal{A}} \pi^s_a(\Lambda_t\cap R_a) \ \mbox{and} \ K^s_t=\bigcup_{a\in \mathcal{A}}\pi^u_a(\Lambda_t\cap R_a).$$

In \cite{GC1} is showed the following result, extending a previous result from \cite{CMM}:

\begin{theorem}\label{janelas}
   Let $r\geq 2$ and $\varphi\in \text{Diff}^{2}(S)$ a conservative diffeomorphism preserving a smooth area form $\omega$ and take $\Lambda$ a mixing horseshoe of $\varphi$. If $f\in C^r(S,\mathbb{R})$ satisfies that  $\forall \ z\in \Lambda, \ \nabla f(z)\neq 0$, then the functions
	$$t\mapsto HD(K^u_t) \ \mbox{and} \ t\mapsto HD(K^s_t)$$
are equal and continuous. 
Even more, one has
$$HD(\Lambda_t)=2HD(K^u_t).$$
\end{theorem}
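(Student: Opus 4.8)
The plan is to deduce all three assertions from a single structural fact: the dimensions of $\Lambda_t$, $K^u_t$ and $K^s_t$ are approximated from inside by sub‑horseshoes of $\varphi$ contained in $\Lambda_t$. Precisely, I would first prove
$$HD(K^u_t)=\sup\{HD(K^u(\Lambda'))\st \Lambda'\subseteq\Lambda_t\ \text{a horseshoe of }\varphi\}$$
together with the analogous statements for $K^s_t$ and for $\Lambda_t$ (with $HD(\Lambda')$ on the right). This is the heart of the matter and is proved symbolically via the conjugacy $\Pi\colon\Lambda\to\Sigma_{\mathcal B}$: a point $x\in\Lambda_t$ corresponds to a bi‑infinite admissible word all of whose values $f(\varphi^n x)$ lie below $t$, and for each $\epsilon>0$ one builds a finite family of finite admissible words, closed under admissible concatenation, whose associated complete subshift is a horseshoe $\Lambda'\subseteq\Lambda_t$ with $HD(K^u(\Lambda'))>HD(K^u_t)-\epsilon$. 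The construction is the renormalization/bounded‑distortion argument of \cite{M3} and \cite{CMM}: select a finite portion of $K^u_t$ carrying almost all of its dimension, realize it inside a genuine $\varphi$‑invariant set using the local product structure near $\Lambda$, and forbid the finitely many transitions that could raise some $f(\varphi^n x)$ above $t$.

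Granting this approximation, the identities $HD(K^u_t)=HD(K^s_t)$ and $HD(\Lambda_t)=2HD(K^u_t)$ become formal. Any horseshoe $\Lambda'\subseteq\Lambda_t$ is a horseshoe of the conservative diffeomorphism $\varphi$ restricted to a neighbourhood of $\Lambda'$, so the McCluskey--Manning/Palis--Takens formula gives $HD(\Lambda')=HD(K^u(\Lambda'))+HD(K^s(\Lambda'))$. Moreover $|\det D\varphi|\equiv 1$ forces the singular values of $D\varphi$ along $E^u$ and $E^s$ to be reciprocal, so the Bowen‑type pressure equations determining $HD(K^u(\Lambda'))$ (expansion along $E^u$ under $\varphi$) and $HD(K^s(\Lambda'))$ (expansion along $E^s$ under $\varphi^{-1}$) involve the same potential, whence $HD(K^u(\Lambda'))=HD(K^s(\Lambda'))$. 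Taking suprema over $\Lambda'\subseteq\Lambda_t$ and invoking the approximation yields the two identities.

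For continuity, monotonicity of $t\mapsto HD(K^u_t)$ is immediate from $\Lambda_t\subseteq\Lambda_s$ for $t\le s$, so only the absence of jumps must be checked. For right‑continuity, $\Lambda_t=\bigcap_{s>t}\Lambda_s$ gives $HD(K^u_t)\le\inf_{s>t}HD(K^u_s)$, and for the reverse inequality I would take, for $s$ slightly larger than $t$, a horseshoe $\Lambda'\subseteq\Lambda_s$ with $HD(K^u(\Lambda'))$ close to $HD(K^u_s)$ and replace it by a deep renormalization of itself lying inside $\Lambda_t$: iterating the expanding maps $\psi_u,\psi_s$ enough times makes the excess over $t$ of the finitely many relevant values $f(\varphi^n x)$ — which is controlled by word length via bounded distortion — negligible, at no cost in dimension. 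For left‑continuity, given $\epsilon>0$ pick by the approximation a horseshoe $\Lambda'\subseteq\Lambda_t$ with $HD(K^u(\Lambda'))>HD(K^u_t)-\epsilon$; since $\Lambda'$ is compact, $t':=\max_{x\in\Lambda'}m_{\varphi,f}(x)\le t$, and if $t'<t$ then $HD(K^u_{t'})\ge HD(K^u(\Lambda'))>HD(K^u_t)-\epsilon$ finishes it, while if $t'=t$ one excludes from the concatenation family the finitely many words realizing a value equal to $t$, losing only $o(1)$ of dimension.

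The main obstacle is exactly this sub‑horseshoe approximation together with the boundary bookkeeping in the continuity step: one must show that the constraint ``$f(\varphi^n x)\le t$ for all $n$'' never prevents the existence of concatenation‑closed families of words of nearly maximal dimension, and that the finitely many transitions one is forced to forbid in order to stay $\le t$ (or $<t$) cost an arbitrarily small amount of Hausdorff dimension. This is where the machinery for $C^{1+\alpha}$ regular Cantor sets from \cite{M3} and \cite{CMM} — distortion control, renormalization, and the dimension formula — carries the real weight; once it is in place, the equalities and the continuity follow softly.
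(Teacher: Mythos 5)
There is a wrinkle you should be aware of: the paper does not prove Theorem \ref{janelas} at all — it is imported verbatim from \cite{GC1} (which extends \cite{CMM}), so there is no in-paper argument to compare yours with. Judged on its own, your outline reproduces the broad strategy of \cite{CMM}/\cite{GC1}: inner approximation of $\Lambda_t$, $K^u_t$, $K^s_t$ by subhorseshoes, the identity $HD(K^u(\Lambda'))=HD(K^s(\Lambda'))$ for conservative horseshoes via Bowen's equation, and the sum formula. But the decisive steps are left as gaps, and in two places the mechanism you propose would not work as stated.

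First, your right-continuity argument is either circular or false. If $\Lambda'\subseteq\Lambda_s$ contains orbits whose $f$-values lie in $(t,s]$, no ``deep renormalization'' of $\Lambda'$ (passing to longer words of the same invariant set) removes those orbits; to land inside $\Lambda_t$ you must delete the cylinders meeting $\{f>t\}$, and the claim that this costs no dimension is precisely the delicate point, not a consequence of bounded distortion. If instead your renormalization only shrinks the excess, you end up in $\Lambda_{t+\epsilon}$ for small $\epsilon$, and concluding $HD(K^u_{t+\epsilon})\to HD(K^u_t)$ is exactly the right-continuity you are trying to prove. The actual route (as in \cite{M3}, \cite{CMM}) is that $HD(K^u_t)$ coincides with the limit of finite-depth covering estimates, which are locally constant in $t$ from the right by compactness; proving that coincidence is the same key approximation lemma you assert in your first paragraph, i.e.\ the entire content of the theorem, and it is also where the hypothesis $\nabla f\neq 0$ must enter — your sketch never uses it. Second, for $HD(\Lambda_t)=2HD(K^u_t)$ you only supply the lower bound (suprema over subhorseshoes); the upper bound $HD(\Lambda_t)\le HD(K^s_t)+HD(K^u_t)$ does not follow formally from the local product structure (products only give the inequality in the wrong direction for Hausdorff dimension) and needs, e.g., the coincidence of Hausdorff and box dimensions of these sets, which is again part of the approximation lemma rather than a soft consequence. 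So the proposal is a reasonable reconstruction of the known strategy, but the parts it defers to citation are the theorem, not routine machinery.
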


\subsection{The horseshoe $\Lambda(N)$}

Given an integer $N\ge 2$, write
$$\Lambda(N)=C_N\times \tilde{C}_N.$$
If $x=[0;a_1,a_2,...]$ and $y=[a_0;a_{-1},a_{-2},...]$ then we consider $\varphi:\Lambda(N) \rightarrow \Lambda(N)$ given by
	\begin{eqnarray*}\label{dif}
	    \varphi(x,y)&=&(G(x),a_1+1/y)\\
     &=&([0;a_2,a_3,...],a_1+[0;a_0,a_{-1},...]).
	\end{eqnarray*}
Also, equip $\Lambda(N)$ with the real map $f(x,y)=x+y$. We note that $\varphi$ can be extended to a $C^{\infty}$-diffeomorphism on a diffeomorphic copy of the 2-dimensional sphere $\mathbb{S}^2$.

Notice also that $\varphi$ is conjugated to the restriction to $C_N\times C_N$ of the map $\psi:(0,1)\times(0,1)\to [0,1)\times(0,1)$ given by 
$$\psi(x,y)=\left(G(x),\frac1{y+\lfloor 1/x\rfloor}\right)$$ and following \cite{Ar} and \cite{S.ITO} we know that $\psi$ has an invariant measure equivalent to the Lebesgue measure. In particular, $\varphi$ also has an invariant measure equivalent to the Lebesgue measure and then $\varphi$ is conservative. 

Indeed, if $\mathcal{S}=\{(x,y)\in {\mathbb R}^2|0<x<1, 0<y<1/(1+x)\}$ and $T:\mathcal{S}\to\mathcal{S}$ is given by
$$T(x,y)=(G(x),x-x^2 y),$$
then $T$ preserves the Lebesgue measure in the plane.
If $h:\mathcal{S}\to [0,1)\times(0,1)$ is given by $h(x,y)=(x,y/(1-xy))$ then $h$ is a conjugation between $T$ and $\psi$ (and thus $\psi$ preserves the smooth measure $h_*$(Leb)).

For $\Lambda(N)$ we have the Markov partition $\{R_a\}_{a\in \mathcal{A}}$ where $\mathcal{A}=\{1,2, \dots,N\}$ and $R_a$ is such that $R_a\cap \Lambda(N)=C_N\times(C_N+a)=C_N\times C_N+(0,a)$. By definition, $\varphi$ expands in the $x$-direction and contracts in the $y$-direction. Therefore, for $(x,y)\in R_a$ we can set $\pi^s_a(x,y)=(x,a+[0;\overline{N,1}])$ and 
\begin{eqnarray*}
\psi_u(x,a+[0;\overline{N,1}])&=&\pi ^s_{a_1(x)}(\phi(x,a+[0;\overline{N,1}]))\\
&=& \pi ^s_{a_1(x)}(G(x),a_1(x)+1/(a+[0;\overline{N,1}]))\\
&=&(G(x),a_1(x)+[0;\overline{N,1}])
	\end{eqnarray*}
thus we can identify $(K^u(\Lambda(N)),\psi_u)$ with $(\tilde{C}_N,G^{'})$. A similar identification can be made for $(K^s(\Lambda(N)),\psi_s)$.

One has that $\varphi|_{\Lambda_N}$ is topologically conjugated to $\sigma:\Sigma(N)\rightarrow \Sigma(N)$ (via a map $\Pi: \Lambda(N) \to \Sigma(N)$), and that in sequences, $f$ becomes $\tilde{f}:\Sigma(N)\rightarrow \mathbb{R}$ given by 
$$\tilde{f}(\theta)=[0;a_1(\theta),a_2(\theta),...]+a_0(\theta)+[0;a_{-1}(\theta),a_{-2}(\theta),...]=\lambda_0(\theta),$$ where $\theta=(a_i(\theta))_{i\in \mathbb{Z}},$ and so
$$L_{\varphi,f}(\Lambda(N))=L(N)\ \ \mbox{and}\ \ M_{\varphi,f}(\Lambda(N))=M(N).$$

In this context, let $\alpha=(a_{s_1},a_{s_1+1},...,a_{s_2})\in \mathcal{A}^{s_2-s_1+1}$ any word where $s_1, s_2 \in \mathbb{Z} , \ s_1 < s_2$ and fix $s_1\le s\le s_2$. Define then
	$$R(\alpha;s):=\bigcap_{m=s_1-s}^{s_2-s} \varphi^{-m}(R_{a_{m+s}}).$$
Note that if $x\in R(\alpha;s)\cap \Lambda(N)$ then the symbolic representation of $x$ is in the way $(...a_{s_1}...a_{s-1};a_{s},a_{s+1}...a_{s_2}...)$ where on the right of the ; is the $0$-th position. 

Finally, let us consider $A_N=[0;\overline{N,1}]$ and $B_N=[0;\overline{1,N}]$. As 
$$NA_N+ A_NB_N=1\ \mbox{and}\  B_N+B_NA_N=1,$$ 
we have $A_N=\dfrac{B_N}{N}.$  
Thus $B_N=\frac{-N+\sqrt{N^2+4N}}{2} \ \mbox{,} \ A_N=\frac{-N+\sqrt{N^2+4N}}{2N}$ and then
$$\max f|_{\Lambda(N)}=2B_N+N=\sqrt{N^2+4N},\ \min f|_{\Lambda(N)}=2A_N+1=\frac{\sqrt{N^2+4N}}{N}.$$

\section{Proof of the main theorem}

\subsection{Connection of subhorseshoes}
For the next, it will be useful to give the following definition
\begin{definition}\label{conection of horseshoes}
Given $\Lambda^1$ and $\Lambda^2$ subhorseshoes of a horseshoe $\Lambda$ and $t\in \mathbb{R}$, we said that $\Lambda^1$ \emph{connects} with $\Lambda^2$ or that $\Lambda^1$ and $\Lambda^2$ \emph{connect} before $t$ if there exist a subhorseshoe $\tilde{\Lambda}\subset \Lambda$ and some $q< t$ with $\Lambda^1 \cup \Lambda^2 \subset \tilde{\Lambda}\subset \Lambda_q$.
\end{definition}

\begin{lemma}\label{connection}
Suppose $\Lambda^1$ and $\Lambda^2$ are subhorseshoes of $\Lambda$ and for some $x,y \in \Lambda$ we have $x\in W^u(\Lambda^1)\cap W^s(\Lambda^2)$ and $y\in W^u(\Lambda^2)\cap W^s(\Lambda^1)$. If for some $t\in \mathbb{R}$, \ it is true that 
$$\Lambda^1 \cup \Lambda^2 \cup \mathcal{O}(x) \cup \mathcal{O}(y) \subset \Lambda_t,$$ then for every $\epsilon >0$,\ $\Lambda^1$ and $\Lambda^2$ connect before $t+\epsilon$. 
\end{lemma}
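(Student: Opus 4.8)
The plan is to build the connecting subhorseshoe $\tilde\Lambda$ explicitly from long admissible blocks of $\Lambda^1$, $\Lambda^2$ and of the two connecting orbits $\cO(x)$, $\cO(y)$, and then to estimate its Markov value using Lemma \ref{lemao} (the elementary continued-fraction comparison). Since $x\in W^u(\Lambda^1)\cap W^s(\Lambda^2)$, the past of $x$ shadows $\Lambda^1$ and the future of $x$ shadows $\Lambda^2$; symbolically this means that for every $R$ there is an admissible word $u_R$ of the form (a long word in $\Lambda^1$)(a fixed transition block)(a long word in $\Lambda^2$) all of whose shifts $\sigma^j$ have $\lambda_0$-value at most $\max\{m_{\varphi,f}|_{\Lambda^1}, m_{\varphi,f}|_{\Lambda^2}, m(\cO(x))\}$ up to an error tending to $0$ as $R\to\infty$. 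Symmetrically $y$ gives words $v_R$ from $\Lambda^2$ to $\Lambda^1$. First I would fix $R$ large and let $\tilde\Lambda = \tilde\Lambda_R$ be the subhorseshoe whose admissible sequences are obtained by freely concatenating (i) blocks of length $\ge R$ coming from $\Lambda^1$, (ii) blocks of length $\ge R$ from $\Lambda^2$, (iii) the bridge word $u_R$ when passing from a $\Lambda^1$-block to a $\Lambda^2$-block, and (iv) the bridge word $v_R$ when passing back; this is a subshift of finite type (hence a subhorseshoe) because the bridging data is finite, and it manifestly contains $\Lambda^1\cup\Lambda^2$.

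Next I would bound $m_{\varphi,f}(z)$ for every $z\in\tilde\Lambda$. Given $z$ and an index $j$, the window $\sigma^j(z)$ around position $0$ of length $2R+2$ either lies entirely inside one of the constituent blocks — in which case $\lambda_0(\sigma^j(z))$ differs from the corresponding value in $\Lambda^1$, $\Lambda^2$, $\cO(x)$ or $\cO(y)$ by at most $1/2^{R-1}$ by the first bullet of Lemma \ref{leminha} — or it straddles a junction, in which case I apply Lemma \ref{lemao} with the appropriate choice of $\beta^1,\beta^2,\beta^3$ drawn from the continuations available in $\tilde\Lambda$ and in the "pure" pieces, to again get $\lambda_0(\sigma^j(z)) < \max\{\text{relevant pure Markov values}\} + 1/2^{R-1}$. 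Taking the supremum over $j$ and using the hypothesis $\Lambda^1\cup\Lambda^2\cup\cO(x)\cup\cO(y)\subset\Lambda_t$, which says exactly that all those pure Markov values are $\le t$, gives $m_{\varphi,f}(z)\le t + 1/2^{R-1}$ for all $z\in\tilde\Lambda_R$, i.e. $\tilde\Lambda_R\subset\Lambda_{t+1/2^{R-1}}$. Given $\epsilon>0$, choosing $R$ with $1/2^{R-1}<\epsilon$ and setting $q = t+1/2^{R-1} < t+\epsilon$ yields $\Lambda^1\cup\Lambda^2\subset\tilde\Lambda_R\subset\Lambda_q$, which is precisely the assertion that $\Lambda^1$ and $\Lambda^2$ connect before $t+\epsilon$.

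The main obstacle is the bookkeeping at the junctions: one must make sure that when a window overlaps a bridge word $u_R$ or $v_R$, the sequence it is compared against in Lemma \ref{lemao} is genuinely an admissible continuation (so that the resulting Markov value really is bounded by $t$ and not by something larger), and that the "fixed transition block" used to splice words can be chosen once and for all of bounded length independent of $R$ — this is where mixing of $\Lambda$ (and of $\Lambda^1$, $\Lambda^2$) is used. A secondary point is to confirm that $\tilde\Lambda_R$ is a bona fide horseshoe, i.e. that the concatenation rules define a topologically transitive subshift of finite type with no isolated points; transitivity follows because the bridges let one pass from any $\Lambda^1$-state to any $\Lambda^2$-state and back, and the absence of isolated points from the fact that $\Lambda^1$ (say) already has none. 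Everything else is the routine estimate $|\alpha-\tilde\alpha|<1/2^{n-1}$ for continued fractions agreeing on the first $n$ entries.
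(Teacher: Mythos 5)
Your overall strategy (build a symbolic sub-SFT containing $\Lambda^1\cup\Lambda^2$ together with bridge words read off from the orbits of $x$ and $y$, then control Markov values by a modulus-of-continuity estimate) is close in spirit to what the paper does, but the step where you control the junctions is a genuine gap, and as written the construction fails. If you \emph{freely} concatenate arbitrary blocks of length $\ge R$ of $\Lambda^1$ (or of $\Lambda^2$), the word created at the junction of two such blocks need not occur anywhere in $\Lambda^1\cup\Lambda^2\cup\mathcal{O}(x)\cup\mathcal{O}(y)$: for instance $\Lambda^1$ could be a subhorseshoe in which a large letter is never followed by a large letter, and gluing two of its blocks can produce exactly that forbidden pattern, making $\lambda_0$ at the junction exceed $t$ by a fixed amount that does not shrink as $R\to\infty$. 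Lemma \ref{lemao} cannot repair this: it only bounds $\lambda_0$ of the spliced sequence when the inserted continuation is \emph{sandwiched}, $[0;\beta^1]<[0;\beta^2]<[0;\beta^3]$, between two continuations that are realized by sequences whose Markov values are already known to be $\le t$ (in Section 3.2 this sandwich is arranged very deliberately, and even there it only works for special inserted words like $s,\overline{1}$); at an arbitrary block-to-block junction no such pair of controlled comparison sequences is available, and this is precisely the ``bookkeeping'' you flag as the main obstacle without resolving it. The repair is to glue blocks of $\Lambda^1$ to each other only through connecting words admissible \emph{inside} $\Lambda^1$ (using transitivity of the subhorseshoe), likewise for $\Lambda^2$, and to take the bridges as long genuine subwords of $\Pi(x)$, $\Pi(y)$ flanked by long blocks of the $\Lambda^1$- and $\Lambda^2$-tails they are asymptotic to; then every window of length $2R+1$ coincides with a window of an actual sequence from $\Lambda^1\cup\Lambda^2\cup\mathcal{O}(x)\cup\mathcal{O}(y)$ and only the first bullet of Lemma \ref{leminha} is needed, no sandwich at all.

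Note also that the lemma is stated in the general setting of Subsection 3.1 (an arbitrary mixing horseshoe of a surface diffeomorphism and a differentiable $f$), so the continued-fraction estimates $1/2^{R-1}$ are not literally available; one must replace them by uniform continuity of $f$ with respect to the symbolic metric, or argue geometrically. The paper's proof avoids all junction bookkeeping: it takes a Markov partition of small diameter, keeps only the rectangles meeting $\Lambda^1\cup\Lambda^2\cup\mathcal{O}(x)\cup\mathcal{O}(y)$ (so that $f<t+\epsilon$ automatically on the maximal invariant set $\Lambda_{\mathcal{R}}$ of their union), and then uses the shadowing lemma along a pseudo-orbit through $\Lambda^1$, the orbit of $x$, and $\Lambda^2$ (and symmetrically with $y$) to show that $\Lambda^1$ and $\Lambda^2$ lie in the same transitive component of $\Lambda_{\mathcal{R}}$, which is the desired connecting subhorseshoe. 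You should either adopt that route or carry out the symbolic gluing with admissible connectors as described above.
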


\begin{proof}
Take a Markov partition $\mathcal{P}$ for $\Lambda$ with diameter small enough such that $ \max f|_{\bigcup\limits_{P\in \mathcal{R}} P}< t+\epsilon,$
where $\mathcal{R}=\{P\in \mathcal{P}: P\cap (\Lambda^1 \cup \Lambda^2 \cup \mathcal{O}(x) \cup \mathcal{O}(y))\neq \emptyset \}$ and consider
$$\Lambda_{\mathcal{R}}=\bigcap \limits_{n \in \mathbb{Z}} \varphi ^{-n}(\bigcup \limits_{P\in \mathcal{R}} P).$$ Evidently $\Lambda^1 \cup \Lambda^2 \cup \mathcal{O}(x) \cup \mathcal{O}(y)\subset \Lambda_{\mathcal{R}}\subset \Lambda_{t+\epsilon}$, then the lemma will be proved if we show that $\Lambda^1$ and $\Lambda^2$ form part of the same transitive component of $\Lambda_{\mathcal{R}}$.

Let $x_1\in \Lambda^1$, \ $x_2\in \Lambda^2$ and $\rho_1,\rho_2>0$. Take $$\eta=\frac{1}{2}\min \{\rho_1, \rho_2, \min \{d(P,Q):P,Q\in \mathcal{R}\  \mbox{and}\ P\neq Q \} \}.$$ By the shadowing lemma there exist $0<\delta \leq \eta$ such that every $\delta$-pseudo orbit of $\Lambda$ is $\eta$-shadowed by the orbit of some element of $\Lambda$.

On the other hand, as $\varphi|_{\Lambda^1}$ is transitive and $x\in W^u(\Lambda^1)$ there exist $y_1\in \Lambda^1 \cap B(x_1,\delta)$ and $N_1, M_1\in \mathbb{N}$ such that $d(\varphi^{M_1}(y_1),\varphi^{-N_1}(x))<\delta$ and analogously  as $\varphi|_{\Lambda^2}$ is transitive and $x\in W^s(\Lambda^2)$ there exist $y_2\in \Lambda^2$ and $N_2, M_2\in \mathbb{N}$ such that $d(\varphi^{N_2}(x),y_2)<\delta$ and $d(x_2,\varphi^{M_2}(y_2))<\delta$. Define then the $\delta$-pseudo orbit:
$$\dots ,\varphi^{-1}(y_1);y_1,\varphi(y_1), \dots,  \varphi^{M_1-1}(y_1), \varphi^{-N_1}(x),\dots, \varphi^{N_2-1}(x), y_2, \varphi(y_2), \dots$$

Then there exists $w\in \Lambda$ that $\eta$-shadowed that orbit. Moreover as the $\delta$-pseudo orbit have all its terms in $\bigcup \limits_{P\in \mathcal{R}} P$ and $\eta \leq \frac{1}{2} \min \{d(P,Q):P,Q\in \mathcal{R}\  \mbox{and}\ P\neq Q \} $ we have also $\mathcal{O}(w)\subset \bigcup \limits_{P\in \mathcal{R}} P\ ;$ that is, $w\in \Lambda_{\mathcal{R}}$ and furthermore 
$$w\in B(x_1,\rho_1) \quad  \mbox{and} \quad \varphi^{M_1+N_1-1+N_2+M_2}(w)\in B(x_2,\rho_2).$$

The proof that there exists $w\in B(x_2, \rho_2)$ and $M\in \mathbb{N}$ such that $\varphi^M(w)\in B(x_1, \rho_1)$ is analog.
\end{proof}
 \begin{corollary}\label{connection2}
 Suppose $\Lambda^1$ and $\Lambda^2$ are subhorseshoes of $\Lambda$ with $\Lambda^1 \cup \Lambda^2 \subset \Lambda_t$ for some $t\in \mathbb{R}$. If $\Lambda^1 \cap \Lambda^2\neq \emptyset$, then for every $\epsilon >0$, \ $\Lambda^1$ and $\Lambda^2$ connects before $t+\epsilon$. 
 \end{corollary}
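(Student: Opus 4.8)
The plan is to deduce this directly from Lemma \ref{connection} by producing the heteroclinic points $x, y$ required there out of a common point of $\Lambda^1$ and $\Lambda^2$. First I would pick any point $p \in \Lambda^1 \cap \Lambda^2$. Since $p$ lies in the horseshoe $\Lambda^1$, its whole orbit $\mathcal{O}(p)$ is contained in $\Lambda^1$, and in particular $p \in W^u(\Lambda^1) \cap W^s(\Lambda^1)$; likewise, since $p \in \Lambda^2$, we have $p \in W^u(\Lambda^2) \cap W^s(\Lambda^2)$. Hence $p \in W^u(\Lambda^1) \cap W^s(\Lambda^2)$ and simultaneously $p \in W^u(\Lambda^2) \cap W^s(\Lambda^1)$, so we may take $x = y = p$ in the hypothesis of Lemma \ref{connection}.

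It then remains to check the orbit-containment hypothesis of Lemma \ref{connection}, namely $\Lambda^1 \cup \Lambda^2 \cup \mathcal{O}(x) \cup \mathcal{O}(y) \subset \Lambda_t$. But $\mathcal{O}(x) = \mathcal{O}(y) = \mathcal{O}(p) \subset \Lambda^1 \subset \Lambda^1 \cup \Lambda^2$, so this set is just $\Lambda^1 \cup \Lambda^2$, which is contained in $\Lambda_t$ by assumption. Thus all hypotheses of Lemma \ref{connection} are met, and the lemma yields that for every $\epsilon > 0$, $\Lambda^1$ and $\Lambda^2$ connect before $t + \epsilon$, which is exactly the assertion of the corollary.

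There is essentially no obstacle here: the only thing to be slightly careful about is the observation that membership of a point in a (sub)horseshoe forces its entire orbit, and hence that point itself viewed as lying on both its own stable and unstable sets, to be available as a (degenerate) heteroclinic connection — that is, that the definition of $W^u$ and $W^s$ of a horseshoe includes the horseshoe itself. Once that is granted, the corollary is an immediate specialization of Lemma \ref{connection} with $x = y = p$.
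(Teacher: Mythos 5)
Your proof is correct and is essentially the paper's own argument: take a common point $w\in\Lambda^1\cap\Lambda^2$, observe it lies in $W^u(\Lambda^1)\cap W^s(\Lambda^2)$ and in $W^u(\Lambda^2)\cap W^s(\Lambda^1)$, and apply Lemma \ref{connection} with $x=y=w$, the orbit-containment hypothesis being automatic since $\mathcal{O}(w)\subset\Lambda^1\cup\Lambda^2\subset\Lambda_t$. Your write-up just makes the last verification explicit, which the paper leaves implicit.
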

 \begin{proof}
 If $\Lambda^1 \cap \Lambda^2\neq \emptyset$,\ then every $w\in \Lambda^1 \cap \Lambda^2$ satisfies $w\in W^u(\Lambda^1)\cap W^s(\Lambda^2)$ and $w\in W^u(\Lambda^2)\cap W^s(\Lambda^1)$ and then we have the conclusion.
 \end{proof}
 \begin{corollary}\label{connection3}
 Let $\Lambda^1$,\ $\Lambda^2$ and $\Lambda^3$ subhorseshoes of $\Lambda$ and $t\in \mathbb{R}$. If $\Lambda^1$\ connects with $\Lambda^2$ before $t$ and $\Lambda^2$\ connects with $\Lambda^3$ before $t$. Then also $\Lambda^1$\ connects with $\Lambda^3$ before $t$.
 \end{corollary}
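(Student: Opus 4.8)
The plan is to reduce everything to Corollary~\ref{connection2}. By the definition of connection before a real number, the hypothesis that $\Lambda^1$ connects with $\Lambda^2$ before $t$ furnishes a subhorseshoe $\tilde\Lambda_{12}\subset\Lambda$ and a real number $q_{12}<t$ with
$$\Lambda^1\cup\Lambda^2\subset\tilde\Lambda_{12}\subset\Lambda_{q_{12}},$$
and likewise the hypothesis that $\Lambda^2$ connects with $\Lambda^3$ before $t$ gives a subhorseshoe $\tilde\Lambda_{23}\subset\Lambda$ and $q_{23}<t$ with
$$\Lambda^2\cup\Lambda^3\subset\tilde\Lambda_{23}\subset\Lambda_{q_{23}}.$$
Set $q:=\max\{q_{12},q_{23}\}<t$. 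Since $\Lambda_s\subset\Lambda_{s'}$ whenever $s\le s'$, both $\tilde\Lambda_{12}$ and $\tilde\Lambda_{23}$ are contained in $\Lambda_q$, so $\tilde\Lambda_{12}\cup\tilde\Lambda_{23}\subset\Lambda_q$.

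Next I would observe that $\tilde\Lambda_{12}$ and $\tilde\Lambda_{23}$ are two subhorseshoes of $\Lambda$ whose intersection contains $\Lambda^2$, hence is nonempty. Therefore Corollary~\ref{connection2}, applied to the pair $(\tilde\Lambda_{12},\tilde\Lambda_{23})$ with threshold $q$, shows that for every $\epsilon>0$ the subhorseshoes $\tilde\Lambda_{12}$ and $\tilde\Lambda_{23}$ connect before $q+\epsilon$. Choosing $\epsilon>0$ small enough that $q+\epsilon<t$ (possible since $q<t$), we obtain a subhorseshoe $\tilde\Lambda\subset\Lambda$ and a real number $q'<q+\epsilon<t$ with
$$\tilde\Lambda_{12}\cup\tilde\Lambda_{23}\subset\tilde\Lambda\subset\Lambda_{q'}.$$

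Finally, since $\Lambda^1\subset\tilde\Lambda_{12}$ and $\Lambda^3\subset\tilde\Lambda_{23}$, we conclude $\Lambda^1\cup\Lambda^3\subset\tilde\Lambda\subset\Lambda_{q'}$ with $q'<t$, which is exactly the assertion that $\Lambda^1$ connects with $\Lambda^3$ before $t$. This completes the argument.

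I do not expect a genuine obstacle here: the statement is just transitivity of the relation ``connects before $t$'', and all the dynamical content (shadowing, transitivity of the subhorseshoes) is already packaged inside Lemma~\ref{connection} and Corollary~\ref{connection2}. The only points needing a little care are the monotonicity $\Lambda_s\subset\Lambda_{s'}$ for $s\le s'$, which lets one pass to a common threshold $q$, and the strict inequality $q<t$, which leaves exactly the room needed to absorb the $\epsilon$ loss incurred by the shadowing construction invoked through Corollary~\ref{connection2}.
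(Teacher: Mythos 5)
Your argument is correct and is essentially the paper's own proof: both take the two subhorseshoes furnished by the hypotheses, note their intersection contains $\Lambda^2$, and apply Corollary~\ref{connection2} with threshold $\max\{q_{12},q_{23}\}$ and an $\epsilon$ chosen so the resulting connection happens before $t$. The only (immaterial) difference is that the paper takes $\epsilon=t-\max\{q_{12},q_{23}\}$ exactly, while you take $\epsilon$ slightly smaller; both land strictly below $t$ as required by the definition.
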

 \begin{proof}
By hypothesis we have two subhorseshoes $\Lambda^{1,2}$ and $\Lambda^{2,3}$ and $q_1,q_2<t$ with$$\Lambda^1 \cup \Lambda^2 \subset \Lambda^{1,2}\subset \Lambda_{q_1} \ \mbox{and }\ \Lambda^2 \cup \Lambda^3 \subset \Lambda^{2,3}\subset \Lambda_{q_2}.$$
Applying corollary \ref{connection2} to $\Lambda^{1,2}$ and $\Lambda^{2,3}$,\ with $\tilde{t}=\max \{q_1,q_2\}$ and $\epsilon=t-\tilde{t}$ we have the result.
\end{proof}

\subsection{Dimension estimates}\label{section3}
Fix an integer $m\geq 1$ and consider the horseshoe
$$\Lambda:=\Lambda(m+3)=C(m+3)\times\tilde{C}(m+3)$$
equipped with the diffeomorphism $\varphi$ and the map $f$ given in the previous section. 
Given $\epsilon>0$, we can take $\ell(\epsilon)\in \mathbb{N}$ sufficiently large such that if $\alpha=(a_0, a_{1} \cdots, a_{2\ell(\epsilon)})\in \{1,2, \cdots, m+3\}^{2\ell(\epsilon)+1}$
and $x, y\in R(\alpha;\ell(\epsilon))\ \mbox{then}\ \abs{f(x)-f(y)}<\epsilon/4.$ 

Consider 
$$\eta \in (m+1+ [0;\overline{1}]+[0;1,m+2,\overline{1,m+3}],m+4)\cap \overline{T}$$0
which is accumulated from the left by points of $T$.

Given $t\in(m+1+ [0;\overline{1}]+[0;1,m+2,\overline{1,m+3}],\eta)\cap T$ and $0<\epsilon<\eta-t$, let
$$C(t,\epsilon)=\{\alpha=(a_0, a_{1} \cdots, a_{2\ell(\epsilon)})\in \{1,2, \cdots, m+3\}^{2\ell(\epsilon)+1}:R(\alpha;\ell(\epsilon))\cap \Lambda_{t+\epsilon/4}\neq \emptyset \}.$$

Define
$$P(t,\epsilon):=\bigcap \limits_{n \in \mathbb{Z}} \varphi ^{-n}(\bigcup \limits_{\alpha \in C(t,\epsilon)}  R(\alpha;\ell(\epsilon))).$$
Note that by construction, $\Lambda_{t+\epsilon/4}\subset P(t,\epsilon)\subset \Lambda_{t+\epsilon/2}$ and being $P(t,\epsilon)$ a hyperbolic set of finite type (see Appendix A for the corresponding definitions and results), it admits a decomposition 
$$P(t,\epsilon)=\bigcup \limits_{x\in \mathcal{X}} \tilde{\Lambda}_x $$
 where $\mathcal{X}$ is a finite index set and for $x\in \mathcal{X}$,\ $\tilde{\Lambda}_i$ is a subhorseshoe or a transient set i.e a set of the form $\tau=\{x\in M: \alpha(x)\subset \tilde{\Lambda}_{i_1} \  \mbox{and } \ \omega(x)\subset \tilde{\Lambda}_{i_2}\}$ where $\tilde{\Lambda}_{i_1}$ and $\tilde{\Lambda}_{i_2}$ with $i_1, i_2 \in \mathcal{X}$ are subhorseshoes. 
 
As for every transient $\tau$ set as before, we have
$$HD(\tau)=HD(K^s(\tilde{\Lambda}_{i_1}))+HD(K^u(\tilde{\Lambda}_{i_2}))$$
and for every subhorseshoe $\tilde{\Lambda}_{i}$, being  $\varphi$ conservative, one has
$$HD(\tilde{\Lambda}_{i})=HD(K^s(\tilde{\Lambda}_{i}))+HD(K^u(\tilde{\Lambda}_{i}))=2HD(K^u(\tilde{\Lambda}_{i}))$$
therefore
\begin{equation}
  HD(P(t,\epsilon))=\max\limits_{x\in \mathcal{X}} HD(\tilde{\Lambda}_{x})=\max \limits_{\substack{x\in \mathcal{X}: \ \tilde{\Lambda}_x \ is\\ subhorseshoe }}HD(\tilde{\Lambda}_{x}).  
\end{equation}

Now, in \cite{M3} was proved for $s\leq \max f|_{\Lambda}$ that
 $$D(s)=HD(k^{-1}(-\infty,s])=HD(K^u_s)$$
and by theorem \ref{janelas}, we have 
$$HD(K^u_s)=\frac{1}{2}HD(\Lambda_{s}).$$
Then, for some $x\in \mathcal{X}$, $HD(\tilde{\Lambda}_x)\geq1$ because $\Lambda_t\subset P(t,\epsilon)$ and 
$$t^*_1=\sup \{s\in \mathbb{R}:\min \{1,HD(\Lambda_s)\}<1\}=\sup \{s\in \mathbb{R}:HD(\Lambda_s)<1\}<t.$$

We will show that any subhorseshoe contained in $P(t,\epsilon)$ with Hausdorff dimension greater or equal than $1$ connects with the fixed orbit $\xi$, given by the kneading sequence $(1)_{i\in \mathbb{Z}}$, before any time bigger than $t+\epsilon/2$. To do that, take any $\delta>0$ and write
$$\tilde{P}(t,\epsilon)=\bigcup\limits_{\substack{x\in \mathcal{X}: \ \tilde{\Lambda}_x \ is\\ subhorseshoe }}\tilde{\Lambda}_x= \bigcup \limits_{i\in \mathcal{I}} \tilde{\Lambda}_i \cup \bigcup \limits_{i\in \mathcal{J}} \tilde{\Lambda}_j$$ where
 $$\mathcal{I}=\{i\in \mathcal{X}: \tilde{\Lambda}_i \ \mbox{is a subhorseshoe and it connects with}\ \xi\ \mbox{before}\ t+\epsilon/2+\delta \}$$
and 
 $$\mathcal{J}=\{j\in \mathcal{X}: \tilde{\Lambda}_j \ \mbox{is a subhorseshoe and it doesn't connect with}\ \xi\ \mbox{before}\ t+\epsilon/2+\delta \}.$$


By proposition \ref{connection}, given $j\in \mathcal{J}$ as $\tilde{\Lambda}_j\cup \xi \subset \Lambda_{t+\epsilon/2}$  we cannot have at the same time the existence of two points $x\in W^u(\tilde{\Lambda}_j)\cap W^s(\xi)$ and $y\in W^u(\xi)\cap W^s(\tilde{\Lambda}_j)$ such that $\mathcal{O}(x) \cup \mathcal{O}(y) \subset \Lambda_{t+\epsilon/2+\delta/2}$. Without loss of generality suppose that there is no $x\in W^u(\tilde{\Lambda}_j)\cap W^s(\xi)$ with $m_{\varphi,f}(x)\leq t+\epsilon/2+\delta/2$ (the argument for the other case is similar). We will show that this condition forces the possible letters that may appear in the sequences that determine the unstable Cantor set of $\tilde{\Lambda}_j$.

Let us begin fixing $R\in \mathbb{N}$ large enough such that $1/2^{R-1}<\delta/2$ and consider the set $\mathcal{C}_{2R+1}=\{I(a_0;a_1, \dots, a_{2R+1}): I(a_0;a_1, \dots, a_{2R+1})\cap K^u(\tilde{\Lambda}_j)\neq \emptyset\}$; clearly $\mathcal{C}_{2R+1}$ is a covering of $K^u(\tilde{\Lambda}_j)$. We will give a mechanism to construct coverings $\mathcal{C}_k$
with $k\geq 2R+1$ that can be used to \emph{efficiently} cover $K^u(\tilde{\Lambda}_j)$ as $k$ goes to infinity.

Indeed, if for some $k\geq 2R+1$, and $I(a_0; a_1, \dots,a_k)\in \mathcal{C}_k$,  $(a_0, a_1, \dots,a_k)$ has continuations with forced first letter, that is, for every $\alpha=(\alpha_n)_{n\in \mathbb{Z}}\in \Pi(\tilde{\Lambda}_j)$ with $\alpha_0,\alpha_1, \dots, \alpha_k=a_0,a_1, \dots, a_k$ one has $\alpha_{k+1}=a_{k+1}$ for some fixed $a_{k+1}$, then we can refine the original cover $\mathcal{C}_k$, by replacing the interval $I(a_0;a_1, \dots, a_k)$ with the interval $I(a_0;a_1, \dots, a_k, a_{k+1})$.

On the other hand, suppose that $(a_0, a_1, \dots,a_k)$ has two continuations with different initial letter, say $ \gamma_{k+1}=(a_{k+1}, a_{k+2},\dots)$ and $\beta_{k+1}=(a^*_{k+1}, a^*_{k+2},\dots)$ with $a_{k+1} \neq a^*_{k+1}$. Take $\alpha=(\alpha_n)_{n\in \mathbb{Z}}\in \Pi(\tilde{\Lambda}_j)$ and $\tilde{\alpha}=(\tilde{\alpha}_n)_{n\in \mathbb{Z}}\in \Pi(\tilde{\Lambda}_j)$, such that $\alpha=(\dots,\alpha_{-2},\alpha_{-1};a_0, a_1, \dots,a_k,\gamma_{k+1})$ and $\tilde{\alpha}=(\dots,\tilde{\alpha}_{-2},\tilde{\alpha}_{-1};a_0, a_1, \dots,a_k,\beta_{k+1})$.  If \\ $a_{k+1}=i$ then, necessarily either $a^*_{k+1}=i+1$ or $a^*_{k+1}=i-1$ because if for example $a_{k+1}+1<a^*_{k+1}$ we can set $s=a_{k+1}+1$ and therefore by lemma \ref{lemao} as $[0;\beta_{k+1}]<[0;s,\overline{1}]<[0;\gamma_{k+1}]$, we would have for all $j\leq k$
\begin{eqnarray*}
    \lambda_0(\sigma^j(\dots,\tilde{\alpha}_{-2},\tilde{\alpha}_{-1};\tilde{\alpha}_{0},\dots ,\tilde{\alpha}_{k},s,\overline{1}))&\leq&\max \{m(\dots, \alpha_{-1};\alpha_{0},\dots ,\alpha_{k},\gamma_{k+1}),\\&&  m(\dots ,\tilde{\alpha}_{-1};\tilde{\alpha}_{0},\dots ,\tilde{\alpha}_{k},\beta_{k+1})\}+1/2^{R-1} \\ &<& t+\epsilon/2+\delta/2.
\end{eqnarray*}
For $j=k+1$,
\begin{eqnarray*}
  \lambda_0(\sigma^j(\dots,\tilde{\alpha}_{-2},\tilde{\alpha}_{-1};\tilde{\alpha}_{0},\dots ,\tilde{\alpha}_{k},s,\overline{1}))&=& [0;\tilde{\alpha}_{k},\dots, \tilde{\alpha}_{0},\tilde{\alpha}_{-1}, \dots ]+s+[0;\overline{1}]\\&<& [0;\tilde{\alpha}_{k},\dots, \tilde{\alpha}_{0},\tilde{\alpha}_{-1}, \dots ]+s+1\\&<&[0;\tilde{\alpha}_{k},\dots, \tilde{\alpha}_{0},\tilde{\alpha}_{-1}, \dots ]+a^*_{k+1}\\ && +[0;a^*_{k+2},a^*_{k+3}, \dots]\\&=& \lambda_0(\sigma^{k+1}(\dots,\tilde{\alpha}_{-1};\tilde{\alpha}_{0},\dots ,\tilde{\alpha}_{k},\beta_{k+1})) \\&\leq &m(\dots ,\tilde{\alpha}_{-1};\tilde{\alpha}_{0},\dots ,\tilde{\alpha}_{k},\beta_{k+1})\\&\leq&t+\epsilon/2  
\end{eqnarray*}
and for $j> k+1$, clearly
$$\lambda_0(\sigma^j(\dots,\tilde{\alpha}_{-2},\tilde{\alpha}_{-1};\tilde{\alpha}_{0},\dots ,\tilde{\alpha}_{k},s,\overline{1}))< 3 < t+\epsilon/2.$$
Then taking $x=\Pi^{-1}((\dots,\tilde{\alpha}_{-2},\tilde{\alpha}_{-1};\tilde{\alpha}_{0},\dots ,\tilde{\alpha}_{k},s,\overline{1}))$ one would have
$$x\in W^u(\tilde{\Lambda}_j)\cap W^s(\xi)\ \mbox{and}\ m_{\varphi,f}(x)\leq t+\epsilon/2+\delta/2$$
that is a contradiction. The case $a_{k+1}-1>a^*_{k+1}$ is quite similar.

Now, suppose $a_{k+1}=i$ and $a^*_{k+1}=i+1$. We affirm that $a_{k+2}=1$ because in other case by lemma \ref{lemao}, as $[0;\beta_{k+1}]<[0;i,\overline{1}]<[0;\gamma_{k+1}]$, we would have again for all $j\leq k$
\begin{eqnarray*}
    \lambda_0(\sigma^j(\dots,\tilde{\alpha}_{-2},\tilde{\alpha}_{-1};\tilde{\alpha}_{0},\dots ,\tilde{\alpha}_{k},i,\overline{1}))<  t+\epsilon/2+\delta/2.
\end{eqnarray*}
For $j> k+1$, one more time
$$\lambda_0(\sigma^j(\dots,\tilde{\alpha}_{-2},\tilde{\alpha}_{-1};\tilde{\alpha}_{0},\dots ,\tilde{\alpha}_{k},i,\overline{1}))< t+\epsilon/2$$
and for $j=k+1$,
\begin{eqnarray*}
\lambda_0(\sigma^j(\dots,\tilde{\alpha}_{-2},\tilde{\alpha}_{-1};\tilde{\alpha}_{0},\dots ,\tilde{\alpha}_{k},i,\overline{1}))&=& [0;\tilde{\alpha}_{k},\dots, \tilde{\alpha}_{0},\tilde{\alpha}_{-1}, \dots ]+i+[0;\overline{1}]\\&<& [0;\tilde{\alpha}_{k},\dots, \tilde{\alpha}_{0},\tilde{\alpha}_{-1}, \dots ]+i+1\\&<&[0;\tilde{\alpha}_{k},\dots, \tilde{\alpha}_{0},\tilde{\alpha}_{-1}, \dots ]+a^*_{k+1}\\ && +[0;a^*_{k+2},a^*_{k+3}, \dots]\\&=& \lambda_0(\sigma^{k+1}(\dots,\tilde{\alpha}_{-1};\tilde{\alpha}_{0},\dots ,\tilde{\alpha}_{k},\beta_{k+1})) \\&\leq &m(\dots ,\tilde{\alpha}_{-1};\tilde{\alpha}_{0},\dots ,\tilde{\alpha}_{k},\beta_{k+1})\\&\leq&t+\epsilon/2. 
\end{eqnarray*}
Then for $x=\Pi^{-1}((\dots,\tilde{\alpha}_{-2},\tilde{\alpha}_{-1};\tilde{\alpha}_{0},\dots ,\tilde{\alpha}_{k},i,\overline{1}))$ one would get the contradiction
$$x\in W^u(\tilde{\Lambda}_j)\cap W^s(\xi)\ \mbox{and}\ m_{\varphi,f}(x)\leq t+\epsilon/2+\delta/2.$$

Even more, we have $a_{k+3}\in \{m+1,m+2,m+3\}$ because if $a_{k+3}=\ell\leq m$, then $[0;\beta_{k+1}]<[0;i,1,\ell+1,\overline{1}]<[0;\gamma_{k+1}]$ and by lemma \ref{lemao} we would have for all $j\leq k$
\begin{eqnarray*}
    \lambda_0(\sigma^j(\dots,\tilde{\alpha}_{-2},\tilde{\alpha}_{-1};\tilde{\alpha}_{0},\dots ,\tilde{\alpha}_{k},i,1,\ell+1, \overline{1}))<t+\epsilon/2+\delta/2.
\end{eqnarray*}
For $j=k+1$,
\begin{eqnarray*}
\lambda_0(\sigma^j(\dots,\tilde{\alpha}_{-2},\tilde{\alpha}_{-1};\tilde{\alpha}_{0},\dots ,\tilde{\alpha}_{k},i,1,\ell+1,\overline{1}))&=&[0;\tilde{\alpha}_{k},\dots, \tilde{\alpha}_{0},\tilde{\alpha}_{-1}, \dots ]+i+\\&& [0;1,\ell+1,\overline{1}]\\&<&[0;\tilde{\alpha}_{k},\dots, \tilde{\alpha}_{0},\tilde{\alpha}_{-1}, \dots ]+a^*_{k+1}\\ && +[0;a^*_{k+2},a^*_{k+3}, \dots]\\&=& \lambda_0(\sigma^{k+1}(\dots,\tilde{\alpha}_{-1};\tilde{\alpha}_{0},\dots ,\tilde{\alpha}_{k},\beta_{k+1})) \\&\leq &m(\dots ,\tilde{\alpha}_{-1};\tilde{\alpha}_{0},\dots ,\tilde{\alpha}_{k},\beta_{k+1})\\&\leq&t+\epsilon/2  
\end{eqnarray*}
and for $j> k+1$, 
$$\lambda_0(\sigma^j(\dots,\tilde{\alpha}_{-2},\tilde{\alpha}_{-1};\tilde{\alpha}_{0},\dots ,\tilde{\alpha}_{k},i,1,\ell+1,\overline{1}))< m+1+ [0;\overline{1}]+[0;1,m+2,\overline{1,m+3}]< t+\epsilon/2$$
then taking $x=\Pi^{-1}((\dots,\tilde{\alpha}_{-2},\tilde{\alpha}_{-1};\tilde{\alpha}_{0},\dots ,\tilde{\alpha}_{k},i,1,\ell+1,\overline{1}))$ one would have
$$x\in W^u(\tilde{\Lambda}_j)\cap W^s(\xi)\ \mbox{and}\ m_{\varphi,f}(x)\leq t+\epsilon/2+\delta/2$$
that is again a contradiction. 

In a similar way, we must have $a^*_{k+2}\in \{m+1,m+2,m+3\}$ because if $a^*_{k+2}=\ell\leq m$, then $[0;\beta_{k+1}]<[0;i+1,\ell+1,\overline{1}]<[0;\gamma_{k+1}]$ and as before we would have for all $j\leq k$
\begin{eqnarray*}
    \lambda_0(\sigma^j(\dots,\tilde{\alpha}_{-2},\tilde{\alpha}_{-1};\tilde{\alpha}_{0},\dots ,\tilde{\alpha}_{k},i+1,\ell+1, \overline{1}))<t+\epsilon/2+\delta/2,
\end{eqnarray*}
for $j=k+1$,
\begin{eqnarray*}
\lambda_0(\sigma^j(\dots,\tilde{\alpha}_{-2},\tilde{\alpha}_{-1};\tilde{\alpha}_{0},\dots ,\tilde{\alpha}_{k},i+1,\ell+1,\overline{1}))&=&[0;\tilde{\alpha}_{k},\dots, \tilde{\alpha}_{0},\tilde{\alpha}_{-1}, \dots ]+i+1+\\&& [0;\ell+1,\overline{1}]\\&<&[0;\tilde{\alpha}_{k},\dots, \tilde{\alpha}_{0},\tilde{\alpha}_{-1}, \dots ]+a^*_{k+1}\\ && +[0;a^*_{k+2},a^*_{k+3}, \dots]\\&=& \lambda_0(\sigma^{k+1}(\dots,\tilde{\alpha}_{-1};\tilde{\alpha}_{0},\dots ,\tilde{\alpha}_{k},\beta_{k+1})) \\&\leq &m(\dots ,\tilde{\alpha}_{-1};\tilde{\alpha}_{0},\dots ,\tilde{\alpha}_{k},\beta_{k+1})\\&\leq&t+\epsilon/2  
\end{eqnarray*}
and for $j> k+1$, 
$$\lambda_0(\sigma^j(\dots,\tilde{\alpha}_{-2},\tilde{\alpha}_{-1};\tilde{\alpha}_{0},\dots ,\tilde{\alpha}_{k},i+1,\ell+1,\overline{1}))< m+1+ [0;\overline{1}]+[0;1,m+2,\overline{1,m+3}]< t+\epsilon/2$$
that let us get a contradiction again. 

In particular, in this case, we can refine the cover $\mathcal{C}_k$ by replacing the interval $I(a_0; a_1, \dots,a_k)$ with the six intervals $I(a_0;a_1, \dots, a_k,i,1,m+1),I(a_0;a_1, \dots, a_k,i,1,m+2), I(a_0;a_1, \dots, a_k,i,1,m+3), I(a_0;a_1, \dots, a_k,i+1,m+1), I(a_0;a_1, \dots, a_k,i+1,m+2)\ \mbox{and}\ I(a_0;a_1, \dots, a_k,i+1,m+3)$  for one and only one $i=1,\dots, m+2.$

Observe that, in fact, some of the intervals considered in the last paragraph, may not be possible. For example, if $\eta=m+3$ then $t+\epsilon/2<m+3$; therefore the letter $m+3$ cannot appear in the kneading sequence of any point of $\tilde{\Lambda}_j$. But this will not affect our argument. Indeed, we affirm that this procedure doesn't increase the 0.49-sum, $H_{0.49}(\mathcal{C}_k)= \sum \limits_{I\in \mathcal{C}_k} \abs{I}^{0.49}$ of the cover $\mathcal{C}_k$ of $K^u(\tilde{\Lambda}_j)$. That is, by \ref{intervals} we need to prove that
$$ \sum\limits_{j=m+1}^{m+3}\abs{I(a_1, \dots, a_k,i,1,j)}^{0.49} + \sum\limits_{j=m+1}^{m+3} \abs{I(a_1, \dots, a_k,i+1,j)}^{0.49} <  \abs{I(a_1, \dots, a_k)}^{0.49} $$
or
\begin{equation}\label{sum}
 \sum\limits_{j=m+1}^{m+3}\left(\frac{\abs{I(a_1, \dots, a_k,i,1,j)}}{\abs{I(a_1, \dots, a_k)}}\right)^{0.49}+ \sum\limits_{j=m+1}^{m+3}\left(\frac{\abs{I(a_1, \dots, a_k,i+1,j)}}{\abs{I(a_1, \dots, a_k)}}\right)^{0.49}<1
 \end{equation}
 where $i=1,\dots, m+2.$

In this direction, we have the following lemmas
 
\begin{lemma}\label{lemao}
    Given $a_0,a_1, \dots, a_n, a,b,c \in \{1, \dots, m+3 \}$ we have 
  $$\frac{\abs{I(a_1,\dots ,a_n,a,b)}}{\abs{I(a_1,\dots ,a_n)}}=\frac{1+r}{(ab+1+br)(ab+a+1+(b+1)r)}$$
  and 
$$\frac{\abs{I(a_1,\dots ,a_n,a,b,c)}}{\abs{I(a_1,\dots ,a_n)}}=\frac{1+r}{(abc+c+a+(bc+1)r)(abc+c+a+ab+1+(bc+b+1)r)}$$
where $r\in (0,1).$
\end{lemma}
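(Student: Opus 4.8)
The plan is to express both ratios in terms of the continuant denominators and then to iterate the recursion \eqref{q_n} three times. The only input needed is the identity recorded in the excerpt,
$$\abs{I(a_1,\dots,a_m)}=\frac{1}{q_m(q_m+q_{m-1})},$$
where $q_j=q_j(a_1,\dots,a_j)$ is the denominator of the convergent $[0;a_1,\dots,a_j]$, governed by \eqref{q_n}. By \eqref{intervals} prepending $a_0$ does not change the left-hand sides of the two claimed identities, so $a_0$ plays no role, and I would work directly with the words $(a_1,\dots,a_n)$, $(a_1,\dots,a_n,a,b)$ and $(a_1,\dots,a_n,a,b,c)$.

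First I would set $q:=q_n$, $q':=q_{n-1}$ and $r:=q'/q$. Since $q=a_nq'+q_{n-2}\ge q'+q_{n-2}>q'$ (using $q_{n-2}\ge q_0=1$), one has $r\in(0,1)$, and $\abs{I(a_1,\dots,a_n)}=\dfrac{1}{q(q+q')}=\dfrac{1}{q^2(1+r)}$. Then, applying \eqref{q_n} once for each appended symbol, the denominators of $[0;\dots,a]$, $[0;\dots,a,b]$ and $[0;\dots,a,b,c]$ work out to
$$Q_1=q(a+r),\qquad Q_2=q\,(ab+1+br),\qquad Q_3=q\,\big(abc+c+a+(bc+1)r\big),$$
and hence $Q_2+Q_1=q\,(ab+a+1+(b+1)r)$ and $Q_3+Q_2=q\,\big(abc+c+a+ab+1+(bc+b+1)r\big)$.

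Finally I would substitute into $\abs{I(a_1,\dots,a_n,a,b)}=1/\big(Q_2(Q_2+Q_1)\big)$ and $\abs{I(a_1,\dots,a_n,a,b,c)}=1/\big(Q_3(Q_3+Q_2)\big)$, divide by $\abs{I(a_1,\dots,a_n)}=1/\big(q^2(1+r)\big)$, and observe that the factors $q^2$ cancel, leaving precisely the two displayed expressions. There is no genuine obstacle here: the statement is a finite computation. The only points requiring a little care are the bookkeeping of the continuant recursion under the three successive appended symbols and the verification that $r=q_{n-1}/q_n\in(0,1)$, which holds as soon as $n\ge 2$ — automatically true in the application, where $n=k\ge 2R+1$.
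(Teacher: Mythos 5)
Your proposal is correct and follows essentially the same route as the paper: both use $\abs{I(b_1,\dots,b_m)}=1/(q_m(q_m+q_{m-1}))$ together with the recurrence \eqref{q_n} applied for each appended symbol, then divide and set $r=q_{n-1}/q_n\in(0,1)$; your intermediate denominators $Q_1,Q_2,Q_3$ are just a slightly more explicit bookkeeping of the same computation, and your remark about $r<1$ needing $n\ge 2$ is a harmless extra precision the paper leaves implicit.
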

\begin{proof}
  Recall that the length of $I(b_1,\dots, b_m)$ is
  $$\abs{I(b_1,\dots, b_m)}=\frac{1}{q_m(q_m+q_{m-1})},$$
where $q_s$ is the denominator of $[0;b_1,\dots, b_s]$. And that we also have the recurrence formula
$$q_{s+2}=b_{s+2}q_{s+1}+q_s.$$
Using this two and three times respectively, we have 
  $$ \abs{I(a_1,\dots ,a_n,a,b)}= \frac{1}{((ab+1)q_n+bq_{n-1})((ab+a+1)q_n+(b+1)q_{n-1})}$$
and 
\begin{align*}
  &\abs{I(a_1,\dots ,a_n,a,b,c)}\\ 
	&=\frac{1}{((abc+c+a)q_n+(bc+1)q_{n-1})((abc+c+a+ab+1)q_n+(bc+b+1)q_{n-1})}
\end{align*}
  so, we conclude 
\begin{eqnarray*}
  \frac{\abs{I(a_1,\dots ,a_n,a,b)}}{\abs{I(a_1,\dots ,a_n)}}&=&\frac{q_n(q_n+q_{n-1})}{((ab+1)q_n+bq_{n-1})((ab+a+1)q_n+(b+1)q_{n-1})}\\ &=&\frac{1+r}{(ab+1+br)(ab+a+1+(b+1)r)} 
\end{eqnarray*}
and 
\begin{align*}
  &\frac{\abs{I(a_1,\dots ,a_n,a,b,c)}}{\abs{I(a_1,\dots ,a_n)}}\\&=\frac{q_n(q_n+q_{n-1})}{((abc+c+a)q_n+(bc+1)q_{n-1})((abc+c+a+ab+1)q_n+(bc+b+1)q_{n-1})}\\ &=\frac{1+r}{(abc+c+a+(bc+1)r)(abc+c+a+ab+1+(bc+b+1)r)}
\end{align*}
with $r=\frac{q_{n-1}}{q_n}\in (0,1)$.
\end{proof}

\begin{lemma}
Fix $x,y,z,w>0$, then 
$$\frac{d}{dr}\left( \frac{1+r}{(x+yr)(z+wr)}\right)= \frac{(x-y)(z-w)-yw(r+1)^2}{(ywr^2+(xw+yz)r+xz)^2}<\frac{(x-y)(z-w)-yw}{(ywr^2+(xw+yz)r+xz)^2}$$
for $r\geq 0$.
\end{lemma}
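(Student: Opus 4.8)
The plan is a direct computation with the quotient rule followed by an elementary estimate; there is no genuine obstacle, only bookkeeping. First I would write the denominator in expanded form: set
$$D(r):=(x+yr)(z+wr)=yw\,r^{2}+(xw+yz)\,r+xz,$$
so that the function under consideration is $(1+r)/D(r)$ and, by the quotient rule,
$$\frac{d}{dr}\!\left(\frac{1+r}{D(r)}\right)=\frac{D(r)-(1+r)D'(r)}{D(r)^{2}},\qquad D'(r)=2yw\,r+(xw+yz).$$
Note that $D(r)^{2}$ is exactly the square $(ywr^{2}+(xw+yz)r+xz)^{2}$ appearing in the denominators on the right-hand side of the statement, so it remains only to identify the numerator.

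Next I would simplify $N(r):=D(r)-(1+r)D'(r)$. Expanding,
$$(1+r)D'(r)=2yw\,r^{2}+(2yw+xw+yz)\,r+(xw+yz),$$
so that
$$N(r)=-yw\,r^{2}-2yw\,r+(xz-xw-yz)=-yw\,(r+1)^{2}+(xz-xw-yz+yw).$$
Since $xz-xw-yz+yw=x(z-w)-y(z-w)=(x-y)(z-w)$, this yields $N(r)=(x-y)(z-w)-yw\,(r+1)^{2}$, which is precisely the claimed closed form for the derivative.

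Finally, for the inequality I would use positivity of the data: $x,y,z,w>0$ give $yw>0$ and $D(r)^{2}>0$, while for $r\ge 0$ one has $(r+1)^{2}\ge 1$, hence $-yw\,(r+1)^{2}\le -yw$. Dividing by the positive quantity $D(r)^{2}$ gives
$$\frac{(x-y)(z-w)-yw\,(r+1)^{2}}{D(r)^{2}}\le \frac{(x-y)(z-w)-yw}{D(r)^{2}},$$
with strict inequality as soon as $r>0$, which is the only range used afterwards (there $r=q_{n-1}/q_n\in(0,1)$). The only step requiring any care is the algebraic expansion of $N(r)$ and recognizing the factorization $(x-y)(z-w)$; everything else is immediate.
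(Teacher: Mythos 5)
Your computation is correct and coincides with what the paper intends: its proof of this lemma is literally ``It's a straightforward computation,'' and your quotient-rule expansion with the factorization $xz-xw-yz+yw=(x-y)(z-w)$ is exactly that computation spelled out. Your remark that the inequality is only non-strict at $r=0$ (and strict for $r>0$, the range actually used since $r=q_{n-1}/q_n\in(0,1)$) is a fair observation about the statement, not a gap in your argument.
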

\begin{proof}
 It's a straightforward computation.   
\end{proof}

Using the previous lemmas, that $i\geq 1$, $m\geq1$, $r\in (0,1)$, and that for $j\in \{m+1,m+2,m+3\}$ we have
$$(2j+1-(j+1))(2j+3-(j+2))-(j+1)(j+2)=j(j+1)-(j+1)(j+2)<0,$$
for the first sum one has
\begin{align*}
   & \sum\limits_{j=m+1}^{m+3}\left(\frac{\abs{I(a_1, \dots, a_k,i,1,j)}}{\abs{I(a_1, \dots, a_k)}}\right)^{0.49}\\
	&= \sum\limits_{j=m+1}^{m+3} \left(\frac{1+r}{(ij+j+i+(j+1)r)(ij+j+2i+1+(j+2)r)}\right)^{0.49}\\ 
	&\leq \sum\limits_{j=m+1}^{m+3} \left(\frac{1+r}{(2j+1+(j+1)r)(2j+3+(j+2)r)}\right)^{0.49} \\ 
	&<\sum\limits_{j=m+1}^{m+3} \left(\frac{1}{(2j+1)(2j+3)}\right)^{0.49}\\
	&\leq \left(\frac{1}{5\times7}\right)^{0.49}+\left(\frac{1}{7\times9}\right)^{0.49}+\left(\frac{1}{9\times11}\right)^{0.49}\\ 
	&< 0.412
\end{align*}
and for the second sum
\begin{align*}
&\sum\limits_{j=m+1}^{m+3}\left(\frac{\abs{I(a_1, \dots, a_k,i+1,j)}}{\abs{I(a_1, \dots, a_k)}}\right)^{0.49}\\
&= \sum\limits_{j=m+1}^{m+3} \left(\frac{1+r}{((i+1)j+1+jr)((i+1)j+i+2+(j+1)r)}\right)^{0.49} \\
 &\leq \sum\limits_{j=m+1}^{m+3} \left(\frac{1+r}{(2j+1+jr)(2j+3+(j+1)r)}\right)^{0.49}\\
 &<\sum\limits_{j=m+1}^{m+3} \left(\frac{2}{(2j+1)(2j+3)}\right)^{0.49} \\ 
&\leq
\left(\frac{2}{5\times7}\right)^{0.49}+\left(\frac{2}{7\times9}\right)^{0.49}+\left(\frac{2}{9\times11}\right)^{0.49}\\ 
&< 0.579 
\end{align*}
that proves \ref{sum} and so let us conclude that $HD(K^u(\tilde{\Lambda}_j))\leq0.49$. Finally, as we are in the conservative setting
$$HD(\tilde{\Lambda}_j)=2HD(K^u(\tilde{\Lambda}_j))<0.99.$$


Fix $\delta=\epsilon/2$. By definition, for $i\in \mathcal{I}$, $\tilde{\Lambda}_i$ connects with  $\xi$ before $t+\epsilon$, then we can apply corollary 3.4 at most $\abs{\mathcal{I}}-1$ times to see that there exists a subhorseshoe $\tilde{\Lambda}(t,\epsilon)\subset \Lambda$ and some $q(t,\epsilon)<t+\epsilon$ such that 
$$\bigcup \limits_{i\in \mathcal{I}} \tilde{\Lambda}_i\subset \tilde{\Lambda}(t,\epsilon)\subset \Lambda_{q(t,\epsilon)}.$$

Now, remember that for any subhorseshoe $\tilde{\Lambda} \subset \Lambda$ being locally maximal we have 
$$W^s(\tilde{\Lambda})= \bigcup \limits_{y\in \tilde{\Lambda}}W^s(y) \  \  \mbox{and}  \ 
 \ W^u(\tilde{\Lambda})= \bigcup \limits_{y\in \tilde{\Lambda}}W^u(y).$$
Then, for every $x\in \Lambda$ such that $\omega(x)\subset \tilde{\Lambda}$, there exists an $y\in \tilde{\Lambda}$ with \hfill\break
 $\lim \limits_{n\to\infinity}d(f(\varphi^n(x)),f(\varphi^n(y)))=0$, and so $\ell_{\varphi,f}(x)=\ell_{\varphi,f}(y)$. Using this, we have 

$$\ell_{\varphi,f}(P(t,\epsilon))=\ell_{\varphi,f}(\tilde{P}(t,\epsilon))=\bigcup \limits_{i\in \mathcal{I}} \ell_{\varphi,f}(\tilde{\Lambda}_i)\cup\bigcup \limits_{j\in \mathcal{J}} \ell_{\varphi,f}(\tilde{\Lambda}_j).$$

On the other hand
$$HD(\bigcup \limits_{j\in \mathcal{J}} \ell_{\varphi,f}(\tilde{\Lambda}_j))= \max\limits_{j\in \mathcal{J}}HD(\ell_{\varphi,f}(\tilde{\Lambda}_j))\leq \max\limits_{j\in \mathcal{J}}HD(f(\tilde{\Lambda}_j))\leq \max\limits_{j\in \mathcal{J}}HD(\tilde{\Lambda}_j)<1$$
so $int(\bigcup \limits_{j\in \mathcal{J}} \ell_{\varphi,f}(\tilde{\Lambda}_j))=\emptyset.$ 

Also, it was proved in lemma $5.2$ of \cite{GC1} that, for $\tilde{t}\leq \max f|_{\Lambda}$,
$$L\cap (-\infty,\tilde{t})= \bigcup \limits_{s<\tilde{t}} \ell_{\varphi,f}(\Lambda_s).$$ 
For the sake of completeness, we will reproduce the proof below:

  Let $x\in \Lambda$ with $\ell_{\varphi,f}(x)=\eta < \tilde t$, then there exist a sequence $\{ n_k \}_{k\in\mathbb{N}}$ such that $\lim \limits_{k \to \infinity} f(\varphi^{n_k}(x))=\eta$. By compactness, without loss of generality, we can also suppose that $\lim \limits_{k \to \infinity} \varphi^{n_k}(x)=y$ for some $y\in \Lambda$ and so that $f(y)=\eta$. 

We claim that $m_{\varphi, f}(y)=\eta$: in other case we would have for some $\tilde{k} \in \mathbb{Z}$ and $r \in \mathbb{R}$, $f(\varphi^{\tilde{k}}(y))>r>\eta$ and then for $k$ big enough by continuity $f(\varphi^{\tilde{k}+n_k}(x))>\eta$ that contradicts the definition of $\eta$. 
Now, take $\tilde{N}$ big enough such that if for two elements $a, b\in \Lambda$ their kneading sequences coincide in the central block (centered at the zero position) of size $2\tilde{N}+1$ then $\abs{f(a)-f(b)}<(\tilde t-\eta)/4$ and $N$ big enough such that for $k\geq N$ one has $f(\varphi^{k}(x))<\eta+(t-\eta)/4$ and the kneading sequences of $\varphi^{n_k}(x)$ and $y$ coincide in the central block of size $2\tilde{N}+1$. Suppose $\Pi(x)= (x_n)_{n\in \mathbb{Z}}$ and $\Pi(y)= (y_n)_{n\in \mathbb{Z}}$, then the point
$$\tilde{y}=\Pi^{-1}(\dots,y_{-n},\dots,y_{-1},x_{n_N},x_{n_N+1},\dots)$$
satisfies $\tilde{y}\in \Lambda_{(t+\eta)/2}$ and $\ell_{\varphi,f}(\tilde{y})=\eta.$ Therefore, we conclude that 
$$L\cap (-\infinity,\tilde t)=\ell_{\varphi,f}( \{x\in \Lambda: \ell_{\varphi,f}(x)<\tilde t \}) \subset \bigcup \limits_{s<\tilde t} \ell_{\varphi,f}(\Lambda_s)$$ 
and as for $s<\tilde t$, \ $\Lambda_s \subset \ell_{\varphi,f}^{-1}(-\infinity,\tilde t)$, the other inclusion also holds and we have the equality 
$$L\cap (-\infinity,\tilde t)= \bigcup \limits_{s<\tilde t} \ell_{\varphi,f}(\Lambda_s).$$

Therefore
\begin{eqnarray*}
  t\in int(m_{\varphi,f}(\Lambda_{t+\epsilon/4}))&=&int(M\cap (-\infty,t+\epsilon/4))=int(L\cap (-\infty,t+\epsilon/4))\\&=&int(\bigcup \limits_{s<t+\epsilon/4} \ell_{\varphi,f}(\Lambda_s))= int(\ell_{\varphi,f}(\Lambda_{t+\epsilon/4}))\\ &\subset& int(\ell_{\varphi,f}(P(t,\epsilon)))  
\end{eqnarray*}
and then, we must have 
$$t<\sup (\bigcup \limits_{i\in \mathcal{I}} \ell_{\varphi,f}(\tilde{\Lambda}_i))\leq \sup(\ell_{\varphi,f}(\tilde{\Lambda}(t,\epsilon)))\leq \sup f(\tilde{\Lambda}(t,\epsilon))=\max f|_{\tilde{\Lambda}(t,\epsilon)}.$$
We have then proved the following result

\begin{proposition}\label{lemmaiterativo}
Given $t\in(m+1+ [0;\overline{1}]+[0;1,m+2,\overline{1,m+3}],\eta)\cap T$ and $\epsilon<\eta-t$ there exist some $q(t,\epsilon)<t+\epsilon$ and a subhorseshoe $\tilde{\Lambda}(t,\epsilon)\subset \Lambda_{q(t,\epsilon)}$ with $HD(\tilde{\Lambda}(t,\epsilon))\geq 1$ such that
\begin{enumerate}
    \item $HD(\Lambda_t)\leq HD(\tilde{\Lambda}(t,\epsilon))$
    \item for every subhorseshoe $\tilde{\Lambda} \subset \Lambda_t$ with $HD(\tilde{\Lambda})\geq0.99$ one has $\tilde{\Lambda}\subset \tilde{\Lambda}(t,\epsilon)$
    \item $t<\max f|_{\tilde{\Lambda}(t,\epsilon)}.$
    
\end{enumerate}

\end{proposition}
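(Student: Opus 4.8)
The plan is to realize $\tilde\Lambda(t,\epsilon)$ as the union of the ``dimension-carrying'' transitive pieces of a finite-type thickening of $\Lambda_t$, after showing that every such piece is forced to merge with the fixed orbit $\xi:=\Pi^{-1}((1)_{n\in\mathbb{Z}})$ below time $t+\epsilon$. First I would fix $\epsilon<\eta-t$ and a Markov partition at scale $\ell(\epsilon)$ fine enough that $f$ oscillates by at most $\epsilon/4$ on each piece, forming $P(t,\epsilon)$ with $\Lambda_{t+\epsilon/4}\subset P(t,\epsilon)\subset\Lambda_{t+\epsilon/2}$, and decompose it (via the structure theory of hyperbolic sets of finite type from Appendix A) into subhorseshoes $\tilde\Lambda_x$ and transient sets. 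Combining the transient-set dimension formula with the conservative identity $HD(\tilde\Lambda_x)=2HD(K^u(\tilde\Lambda_x))$ and Theorem \ref{janelas}, one gets $HD(P(t,\epsilon))=\max_x HD(\tilde\Lambda_x)$ over the subhorseshoe components; since $\Lambda_t\subset P(t,\epsilon)$ and $t>t^*_1$, at least one component has dimension $\ge 1$.

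The core step is to prove that a subhorseshoe component $\tilde\Lambda_j$ which does \emph{not} connect with $\xi$ before $t+\epsilon/2+\delta$ must satisfy $HD(K^u(\tilde\Lambda_j))\le 0.49$. By Lemma \ref{connection}, non-connection rules out a simultaneous pair of heteroclinic orbits between $\tilde\Lambda_j$ and $\xi$ staying below $t+\epsilon/2+\delta/2$; assuming without loss of generality that no $x\in W^u(\tilde\Lambda_j)\cap W^s(\xi)$ has $m_{\varphi,f}(x)\le t+\epsilon/2+\delta/2$, I would use Lemmas \ref{leminha} and \ref{lemao} to constrain the words of $\Pi(\tilde\Lambda_j)$: whenever a word admits two continuations with distinct first letters, those letters must be consecutive, the next letter is forced to be $1$, and the one after must lie in $\{m+1,m+2,m+3\}$ on both branches — otherwise splicing a tail $\overline{1}$ produces a point of $W^u(\tilde\Lambda_j)\cap W^s(\xi)$ with Markov value $\le t+\epsilon/2+\delta/2$, a contradiction (this is exactly where the left endpoint $m+1+[0;\overline{1}]+[0;1,m+2,\overline{1,m+3}]$ of the admissible interval for $t$ is used). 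Then I would refine the cover $\mathcal{C}_k$ of $K^u(\tilde\Lambda_j)$ by replacing each such interval with the six corresponding subintervals, and check via the explicit ratio formula of Lemma \ref{lemao} and the accompanying monotonicity computation that the sum $\sum_{I\in\mathcal{C}_k}|I|^{0.49}$ does not increase; letting $k\to\infty$ gives $HD(K^u(\tilde\Lambda_j))\le 0.49$, hence $HD(\tilde\Lambda_j)\le 0.99<1$.

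Finally, with $\delta=\epsilon/2$ fixed, I would iterate Corollary \ref{connection3} over the finitely many components $\tilde\Lambda_i$ that do connect with $\xi$ before $t+\epsilon$ to glue them into one subhorseshoe $\tilde\Lambda(t,\epsilon)\subset\Lambda_{q(t,\epsilon)}$ with $q(t,\epsilon)<t+\epsilon$. Properties (1) and (2) then follow: any subhorseshoe of $\Lambda_t\subset P(t,\epsilon)$ is transitive, hence lies in a single component, and by the previous step the components of dimension $\ge 0.99$ are precisely those glued into $\tilde\Lambda(t,\epsilon)$, so $HD(\tilde\Lambda(t,\epsilon))=HD(P(t,\epsilon))\ge HD(\Lambda_t)\ge 1$. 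For property (3), I would use $t\in T=int(L)$ together with the identity $L\cap(-\infty,\tilde t)=\bigcup_{s<\tilde t}\ell_{\varphi,f}(\Lambda_s)$ (Lemma 5.2 of \cite{GC1}, reproduced above) to obtain $t\in int(\ell_{\varphi,f}(\Lambda_{t+\epsilon/4}))\subset int(\ell_{\varphi,f}(P(t,\epsilon)))$; since $\bigcup_j\ell_{\varphi,f}(\tilde\Lambda_j)$ has Hausdorff dimension $<1$ and therefore empty interior, $t$ must lie in the closure of $\bigcup_i\ell_{\varphi,f}(\tilde\Lambda_i)\subset\ell_{\varphi,f}(\tilde\Lambda(t,\epsilon))$ and, being an interior point, cannot be its supremum, whence $t<\max f|_{\tilde\Lambda(t,\epsilon)}$.

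I expect the combinatorial word analysis together with the $0.49$-sum estimate (the second paragraph) to be the main obstacle: it requires both the precise identification of the three forbidden continuation configurations — which is where the lower endpoint for $t$ enters — and a careful, if elementary, verification that the refinement of $\mathcal{C}_k$ is dimension-nonincreasing uniformly in the data $(a_1,\dots,a_k)$ and in $r\in(0,1)$.
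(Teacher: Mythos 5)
Your proposal is correct and follows essentially the same route as the paper's own proof (which is exactly the content of Section 3.2): the finite-type thickening $P(t,\epsilon)$ and its decomposition, the $0.49$-sum covering argument showing that components not connecting with $\xi$ have dimension below $0.99$, the gluing of the connecting components via Corollary \ref{connection3} with $\delta=\epsilon/2$, and the interior-of-the-spectrum argument for $t<\max f|_{\tilde{\Lambda}(t,\epsilon)}$. Only cosmetic imprecisions appear (e.g.\ the glued components are those that connect with $\xi$, not ``precisely'' those of dimension $\geq 0.99$, and the bound is $2\times 0.49=0.98<0.99$), none affecting the argument.
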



\subsection{Putting unstable Cantor sets into $k^{-1}(\eta)$}

Let $\eta\in (m+1+ [0;\overline{1}]+[0;1,m+2,\overline{1,m+3}],m+4)\cap \overline{T}$ accumulated from the left by points of $T$ and $\epsilon>0$ such that $m+1+ [0;\overline{1}]+[0;1,m+2,\overline{1,m+3}]<\eta-\epsilon$. Take any strictly increasing sequence $\{t_n\}_{n\geq 0}$ of points of $T$ such that $t_0>\eta-\epsilon$ and $\lim\limits_{n\rightarrow\infty}t_n=\eta$. According to proposition \ref{lemmaiterativo}, we can find a sequence of subhorsehoes $\{\Lambda^n\}_{n\geq0}=\{\tilde{\Lambda}(t_n,(t_{n+1}-t_n)/2)\}_{n\geq0}$ with the following properties

\begin{enumerate}
    \item $HD(\Lambda_{t_n})\leq HD(\Lambda^n)$
    \item $\Lambda^n\subset \Lambda^{n+1}$
    \item $t_n<\max f|_{\Lambda^n}<t_{n+1}.$
    \end{enumerate}

Now, we will construct a homeomorphism $\theta:K^u(\Lambda^0) \rightarrow k^{-1}(\eta)$ whose inverse is H\"older inverse with exponent arbitrarily close to one.


Given $n\geq 0$, being $\Lambda^n$ a mixing horseshoe (because $\xi \subset \Lambda^n$), we can find some $c(n)\in \mathbb{N}$ such that given two letters $a$ and $b$ in the alphabet $\mathcal{A}(\Lambda^n)$ of $\Lambda^n$ there exists some finite word of size $c(n)$: $(a_1,\dots ,a_{c(n)})$ (in the letters of $\mathcal{A}(\Lambda^n)$) such that $(a,a_1,\dots ,a_{c(n)},b)$ is admissible; given $a$ and $b$ consider always a fixed $(a_1,\dots ,a_{c(n)})$ as before. Also, as $\Lambda^n$ is a subhorseshoe of $\Lambda$, it is the invariant set in the union of some rectangles determined for a set of words of size $2p(n)+1$ for some $p(n)\in \mathbb{N}$.

Now, take $n\geq 1$ and consider the kneading sequence $\{x^n_r\}_{r\in \mathbb{Z}}$ of some point $x_n\in \Lambda^n$ such that $f(x_n)=\max f|_{\Lambda^n}$. Also take $r(n)>p(n+1)+p(n)+p(n-1)$ big enough such that for any $\alpha=(a_0, a_{1} \cdots, a_{2r(n)})\in \{1,2, \cdots, m+3\}^{2r(n)+1}$ and $z,y\in R(\alpha;r(n))$ we have $\abs{f(x)-f(y)}<\min \{(t_{n+1}-\max f|_{\Lambda^n})/2,(\max f|_{\Lambda^n}-t_n)/2\}.$ Finally, set $s(n)=\sum_{k=1}^{n}(2r(k)+2c(k)+1)$.

Given $a=[a_0;a_1,a_2,\dots ]\in K^u(\Lambda^0)$ for $n\geq 1$ set $a^{(n)}:=(a_{s(n)!+1},\dots, a_{s(n+1)!})$, so one has
$$a=[a_0;a_1,a_2,\dots ]=[a_0;a_1,\dots,a_{s(1)!},a^{(1)}, a^{(2)},\dots ,a^{(n)}, \dots].$$ 

Define then
$$\theta(a):=[a_0;a_1,\dots,a_{s(1)!},h_{1}, a^{(1)},h_{2},a^{(2)},\dots ,h_{n},a^{(n)},h_{n+1}, \dots]$$
where
$$h_n=(c_1^n, x^n_{-r(n)},\dots ,x^n_{-1},x^n_0,x^n_1,\dots,x^n_{r(n)},c_2^n)$$
and $c_1^n$ and $c_2^n$ are words in the original alphabet $\mathcal{A}=\{1,\dots,m+3 \}$ with $\abs{c_1^n}=\abs{c_2^n}=c(n)$ such that $(a_0,a_1,\dots,a_{s(1)!},h_{1}, a^{(1)},h_{2},\dots ,h_{n},a^{(n)})$ appears in the kneading sequence of some point of $\Lambda^n$.

It is easy to see using the construction of $\theta$ that for every $a\in K^u(\Lambda^0)$, $k(\theta(a))=\eta$, so we have defined the map
\begin{eqnarray*}
  \theta:K^u(\Lambda^0) &\rightarrow& k^{-1}(\eta) \\
   a &\rightarrow& \theta(a)
\end{eqnarray*}
that is clearly continuous and injective.

On the other hand, given any small $\rho>0$ because of the growth of the factorial map, we have $\abs{\tilde{a}_1-\tilde{a}_2}=O(\abs{\theta(\tilde{a}_1)-\theta(\tilde{a}_2)}^{1-\rho})$ for any $\tilde{a}_1, \tilde{a}_2\in K^u(\tilde{\Lambda}_i)$ and $\abs{\tilde{a}_1-\tilde{a}_2}$ small. Indeed, if $\tilde{a}_1$ and $\tilde{a}_2$ are such that the letters in their continued fraction expressions are equal up to the $s$-th letter and $n \in \mathbb{N}$ is maximal such that $s(n)!<s$ then because $\abs{h_k}=2r(k)+2c(k)+1$; $\theta(\tilde{a}_1)$ and $\theta(\tilde{a}_2)$ coincide exactly in their first 
$$s+\sum_{k=1}^{n}(2r(k)+2c(k)+1)=s+s(n)$$
letters.

Now, let $\alpha$ and $\beta$ be finite words of positive integers bounded by $N\in \mathbb{N}$ and $I_N(\alpha)$ is the convex hull of $I(\alpha)\cap C_N$. The so-called bounded distortion property let us conclude that for some constant $C_N>1$ 
$$C_N^{-1}\abs{I_N(\alpha)}\abs{I_N(\beta)}\leq \abs{I_N(\alpha\beta)}\leq C_N\abs{I_N(\alpha)}\abs{I_N(\beta)}$$
also, for some positive constants $\lambda_1,\lambda_2<1$, one has
$$C_N^{-1} \lambda_1^{\abs{\alpha}}\leq\abs{I_N(\alpha)}\leq C_N \lambda_2^{\abs{\alpha}}$$
So, if $s$ is big such that $s(n)/(s+s(n))<\frac{\rho \log \lambda_2}{\log \lambda_1-4\log C_{m+3}}$, using lemma \ref{gugu1}, we have for some constant $\tilde{C}(m+3)$
\begin{eqnarray*}
  &&\abs{\theta(\tilde{a}_1)-\theta(\tilde{a}_2)}^{1-\rho}\\
	&\geq&\tilde{C}(m+3)^{1-\rho}\abs{I(a_1,\dots,a_{s(1)!},h_{1}, a^{(1)},\dots ,a^{(n-1)},h_{n},a_{s(n)!+1},\dots, a_s)}^{1-\rho}\\ 
	&\geq&\tilde{C}(m+3)^{1-\rho}\abs{I_{m+3}(a_1,\dots,a_{s(1)!},h_{1}, a^{(1)},\dots ,a^{(n-1)},h_{n},a_{s(n)!+1},\dots, a_s)}^{1-\rho}\\
  &=& \tilde{C}(m+3)^{1-\rho}\abs{I_{m+3}(a_1,\dots,a_{s(1)!},h_{1}, a^{(1)},\dots ,a^{(n-1)},h_{n},a_{s(n)!+1},\dots, a_s)}\times\\ 
	&& \abs{I_{m+3}(a_1,\dots,a_{s(1)!},h_{1}, a^{(1)},\dots ,a^{(n-1)},h_{n},a_{s(n)!+1},\dots, a_s)}^{-\rho}\\ 
	&\geq& \frac{1}{C_{m+3}^{2n}}\tilde{C}(m+3)^{1-\rho}\abs{I_{m+3}(a_1,\dots,a_{s(1)!})}\abs{I_{m+3}(a^{(1)})}\dots \abs{I_{m+3}(a^{(n-1)})}\times\\ 
	&&\abs{I_{m+3}(a_{s(n)!+1},\dots, a_s)}\abs{I_{m+3}(h_1)}\dots \abs{I_{m+3}(h_n)}\times\\
	&&\abs{I_{m+3}(a_1,\dots,a_{s(1)!},h_{1}, a^{(1)},\dots ,a^{(n-1)},h_{n},a_{s(n)!+1},\dots, a_s)}^{-\rho} \\ 
	&\geq&\frac{1}{C_{m+3}^{3n}}\tilde{C}(m+3)^{1-\rho}\abs{I_{m+3}(a_1,a_2,\dots,a_s)}\abs{I_{m+3}(h_1)}\dots \abs{I_{m+3}(h_n)}\times\\
	&&\abs{I_{m+3}(a_1,\dots,a_{s(1)!},h_{1}, a^{(1)},\dots a^{(n-1)},h_{n},a_{s(n)!+1},\dots, a_s)}^{-\rho}\\ 
 &\geq&\tilde{C}(m+3)^{1-\rho}\abs{I_{m+3}(a_1,a_2,\dots,a_s)}e^{-4n\log C_{m+3}}e^{s(n).\log \lambda_1}\times\\ 
&&\abs{I_{m+3}(a_1,\dots,a_{s(1)!},h_{1}, a^{(1)},\dots a^{(n-1)},h_{n},a_{s(n)!+1},\dots, a_s)}^{-\rho}\\  
&\geq&\tilde{C}(m+3)^{1-\rho}\abs{I_{m+3}(a_1,a_2,\dots,a_s)}e^{(\log \lambda_1-4\log C_{m+3})s(n)}\times\\ 
&&\abs{I_{m+3}(a_1,\dots,a_{s(1)!},h_{1}, a^{(1)},\dots a^{(n-1)},h_{n},a_{s(n)!+1},\dots, a_s)}^{-\rho}\\  
    &\geq&\tilde{C}(m+3)^{1-\rho}\abs{I_{m+3}(a_1,a_2,\dots,a_s)}e^{\rho  (s+s(n))\log \lambda_2}\times\\ 
		&&\abs{I_{m+3}(a_1,\dots,a_{s(1)!},h_{1}, a^{(1)},\dots a^{(n-1)},h_{n},a_{s(n)!+1},\dots, a_s)}^{-\rho}\\ 
  &\geq&\frac{\tilde{C}(m+3)^{1-\rho}}{C_{m+3}^{\rho}}\abs{I_{m+3}(a_1,a_2,\dots,a_s)}\times\\
  &&\abs{I_{m+3}(a_1,\dots,a_{s(1)!},h_{1}, a^{(1)},\dots ,a^{(n-1)},h_{n},a_{s(n)!+1},\dots, a_s)}^{\rho}\times\\ 
	&&\abs{I_{m+3}(a_1,\dots,a_{s(1)!},h_{1}, a^{(1)},\dots a^{(n-1)},h_{n},a_{s(n)!+1},\dots, a_s)}^{-\rho}\\ 
  &\geq&\frac{\tilde{C}(m+3)^{1-\rho}}{C_{m+3}^{\rho}}\abs{\tilde{a}_1-\tilde{a}_2}.
\end{eqnarray*}

Therefore the map $\theta^{-1}:\theta(K^u(\Lambda^0))  \rightarrow K^u(\Lambda^0)$ is a H\"older map with exponent $1-\rho$ and then 
\begin{eqnarray*}
   HD(K^u(\Lambda^0))=HD(\theta^{-1}(\theta(K^u(\Lambda^0))))&\leq& \frac1{1-\rho}HD(\theta(K^u(\Lambda^0)))\\ &\leq& \frac1{1-\rho}HD(k^{-1}(\eta)).
\end{eqnarray*}
Letting $\rho$ go to zero, we obtain
$$HD(K^u(\Lambda^0))\leq HD(k^{-1}(\eta)).$$

Now, as we indicated before, for $s\leq \max f|_{\Lambda}$ one has
 $$HD(k^{-1}(-\infty,s])=\frac{1}{2}HD(\Lambda_{s}),$$
therefore
\begin{eqnarray*}
   HD(k^{-1}(-\infty,\eta-\epsilon])=\frac{1}{2}HD(\Lambda_{\eta-\epsilon})&\leq&\frac{1}{2}HD(\Lambda_{t_0}) \leq\frac{1}{2}HD(\Lambda^0)\\ &=& HD(K^u(\Lambda^0))\leq HD(k^{-1}(\eta)). 
\end{eqnarray*}
Letting $\epsilon$ tend to zero we have
$$HD(k^{-1}(-\infty,\eta])\leq HD(k^{-1}(\eta))$$
and as the other inequality is clearly true, the first part of the result is proven for $\eta\in (m+1+ [0;\overline{1}]+[0;1,m+2,\overline{1,m+3}],m+4)\cap \overline{T}$ which is accumulated from the left by points of $T$.

For the second part of the theorem, we need the following lemma whose proof is essentially the same as the proof of lemma 2.5 of \cite{LM}. 

\begin{lemma}\label{perdadimen}
   Given $(K, \mathcal{P}, \psi)$ a $C^{\alpha}$-regular Cantor set, if $\mathcal{P}^{'} \neq \mathcal{P}$ is a finite sub collection of $\mathcal{P}$ that is also a Markov partition of $\psi$, then the Cantor set determined by $\psi$ and $\mathcal{P}^{'}$
   $$\tilde{K}=\bigcap \limits_{n\geq 0}\psi^{-n}\left( \bigcup \limits_{I\in \mathcal{P}^{'}}I \right) $$
   satisfies $HD(\tilde{K})<HD(K).$
\end{lemma}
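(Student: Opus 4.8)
The plan is to follow the strategy of Lemma 2.5 of \cite{LM}: compare the subshift of finite type defining $\tilde K$ with the one defining $K$, and use the characterization of the Hausdorff dimension of a $C^{1+\alpha}$-regular Cantor set as the unique exponent $s$ for which a suitable thermodynamic/pressure quantity vanishes. Concretely, first I would replace $\psi$ by a high iterate $\psi^\ell$ so that, on each interval $I_j\in\mathcal P$, the branch $\psi^\ell|_{I_j}$ maps $I_j$ onto the convex hull of a union of the $I_j$'s with uniformly bounded distortion; this is harmless since $HD$ is unchanged, and it lets me work with a genuine full shift on cylinders of a fixed combinatorial length. The passage from $\mathcal P$ to the proper subcollection $\mathcal P'$ removes at least one interval, hence removes at least one admissible transition, so the transition matrix $B'$ of $\tilde K$ is obtained from that of $K$ by deleting at least one row/column (and possibly more forbidden transitions).

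The key step is then the following. For $s>0$ and $n\ge 1$ let
$$\Phi_n(s)=\sum_{(\underline a)}\bigl|I(\underline a)\bigr|^s,$$
the sum over all admissible words of length $n$ for $\psi$ (for $\tilde K$ one sums only over words admissible for $\psi$ \emph{and} with all letters in $\mathcal P'$). By bounded distortion, $\Phi_{m+n}(s)\asymp \Phi_m(s)\Phi_n(s)$, so $P(s):=\lim_n \tfrac1n\log\Phi_n(s)$ exists, is continuous and strictly decreasing in $s$, and $HD(K)$ is the unique root of $P(s)=0$; likewise $HD(\tilde K)$ is the unique root of $\tilde P(s)=0$, where $\tilde P$ uses the restricted sums. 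Since every word counted for $\tilde K$ is also counted for $K$, we get $\tilde P(s)\le P(s)$ for all $s$, hence $HD(\tilde K)\le HD(K)$ immediately; the whole content is to upgrade this to a \emph{strict} inequality. For that I would use transitivity (mixing) of $\psi$ on $K$: because $(K,\mathcal P,\psi)$ is topologically mixing, there is a fixed $N_0$ so that any two admissible letters are joined by an admissible word of length $\le N_0$ all of whose letters avoid $\mathcal P'$ is \emph{not} what we want --- rather, I use mixing to show that a definite proportion of the length-$n$ words admissible for $K$ use a letter outside $\mathcal P'$ at least once, so that the restricted sum $\tilde\Phi_n(HD(K))$ is a definite fraction $\le (1-c)$ of $\Phi_n(HD(K))\asymp 1$ \emph{after} a fixed gain, giving exponential decay: $\tilde\Phi_n(HD(K))\le C\theta^n$ with $\theta<1$. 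Thus $\tilde P(HD(K))<0$, and since $\tilde P$ is strictly decreasing with root $HD(\tilde K)$, we conclude $HD(\tilde K)<HD(K)$.

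The main obstacle is precisely establishing the exponential decay $\tilde\Phi_n(HD(K))\le C\theta^n$, i.e.\ that deleting even one interval from a mixing Markov partition strictly lowers the pressure rather than merely not increasing it. The clean way is spectral: after passing to an iterate, the transfer operator $(\cL_s g)(x)=\sum_{\psi(y)=x}|\psi'(y)|^{-s} g(y)$ acting on $C^\alpha$ has leading eigenvalue $e^{P(s)}$, and by mixing of $\psi$ on $K$ the operator is given by an irreducible matrix in the appropriate finite-dimensional model (or a positive operator satisfying a Ruelle--Perron--Frobenius theorem); restricting to $\mathcal P'$ replaces this by a strictly smaller nonnegative operator which, by the Perron--Frobenius theorem for irreducible matrices, has strictly smaller spectral radius. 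Hence $\tilde P(s)<P(s)$ for every $s$, in particular at $s=HD(K)$, which finishes the proof. The only care needed is the reduction to an honestly irreducible finite model, which is exactly where the iterate $\psi^\ell$ and the mixing hypothesis (guaranteed in all our applications, since $\xi\subset\tilde\Lambda$) are used.
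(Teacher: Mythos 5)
Your route---Bowen's equation (dimension as the zero of the pressure $s\mapsto P(-s\log|\psi'|)$), the trivial inequality $\tilde P\le P$ for the subsystem, and an argument that the inequality is \emph{strict} at $s=HD(K)$ because the full system is topologically mixing---is exactly the standard proof of this kind of statement, and it is in the same spirit as the argument of Lemma 2.5 of \cite{LM} to which the paper defers (the paper itself gives no proof, only the citation). Note also that the mixing you need is not an extra hypothesis here: it is built into Definition \ref{cantor}(ii) ($\psi^l(K\cap I_j)=K$ for $l$ large), so your use of it is legitimate, and your reduction $\tilde P(HD(K))<0\Rightarrow HD(\tilde K)<HD(K)$ only needs the easy (covering) direction of Bowen's equation for $\tilde K$, which is fine even if $\tilde K$ is not transitive.

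The one step you should tighten is the strict inequality itself, which is the entire content of the lemma. Your appeal to ``the Perron--Frobenius theorem for irreducible matrices in the appropriate finite-dimensional model'' does not literally apply: the weights $|(\psi^n)'|^{-s}$ are only H\"older, and bounded distortion gives $\abs{I(\alpha\beta)}\asymp\abs{I(\alpha)}\,\abs{I(\beta)}$ only with a constant that compounds under repeated splitting, so there is no exact finite matrix whose spectral radius computes $e^{P(s)}$; a finite model only pins down $P(s)$ up to an additive error, which is useless for strictness. The clean fixes are the ones you half-name: either (i) the Ruelle--Perron--Frobenius theorem for the transfer operator $\mathcal{L}_s$ on $C^\alpha$ together with uniqueness and full support of the Gibbs/equilibrium state at $s=HD(K)$ --- if $\tilde P(HD(K))=0$, an equilibrium state of the subsystem would be an equilibrium state of the full system carried by the proper clopen-invariant subset $\tilde K$, contradicting full support; or (ii) an escape-rate estimate: by mixing and the Gibbs property, from any cylinder the conformal measure of the set of points avoiding the removed interval for $N_0$ more steps loses a definite proportion, giving $\tilde\Phi_n(HD(K))\le C\theta^n$ --- this is your ``definite fraction'' sketch, but it must be run against the conformal measure (or normalized sums), not against the raw sums $\Phi_n$, precisely because of the compounding constants. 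With either of these substitutions your argument is complete.
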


\begin{corollary}\label{perdadimen2}
Let $\Lambda$ be a mixing horseshoe associated with a $C^2$-diffeomorphism $\varphi:S\rightarrow S$ of some surface $S$. Then for any proper mixing subhorsehoe $\tilde{\Lambda}\subset \Lambda$
    $$HD(\tilde{\Lambda})<HD(\Lambda).$$
\end{corollary}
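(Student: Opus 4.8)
The plan is to reduce the statement about the two-dimensional horseshoe to the one-dimensional statement of Lemma \ref{perdadimen} applied to the stable and unstable Cantor sets. First I would recall that, since $\varphi$ is $C^2$ and $\Lambda$ is a mixing horseshoe of a surface diffeomorphism, the associated stable and unstable Cantor sets $K^s(\Lambda)$ and $K^u(\Lambda)$ are $C^{1+\alpha}$-regular Cantor sets in the sense of Definition \ref{cantor}, defined by the expanding maps $\psi_s$ and $\psi_u$ together with the Markov partitions coming from the rectangles $\{R_a\}_{a\in\mathcal A}$; moreover $HD(\Lambda)=HD(K^s(\Lambda))+HD(K^u(\Lambda))$. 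The same structure holds for the proper mixing subhorseshoe $\tilde\Lambda\subset\Lambda$: its defining alphabet $\mathcal A(\tilde\Lambda)$ is a proper subset of (a refinement of) $\mathcal A(\Lambda)$, and the pieces of the Markov partition indexed by words admissible for $\tilde\Lambda$ form a proper sub-collection $\mathcal P'$ of the Markov partition $\mathcal P$ defining $K^u(\Lambda)$, with the same expanding map $\psi_u$; similarly on the stable side.

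The key point is that the sub-collection is \emph{proper}. Since $\tilde\Lambda$ is a proper subhorseshoe, there is at least one rectangle (or, after passing to a common iterate so that both $\Lambda$ and $\tilde\Lambda$ are described by words of the same length, at least one word of that length) which meets $\Lambda$ but not $\tilde\Lambda$; this gives $\mathcal P'\subsetneq\mathcal P$ on the unstable side, and hence also on the stable side. Here one must be slightly careful: one works with a Markov partition fine enough that $K^u(\tilde\Lambda)$ is genuinely the invariant set of $\psi_u$ restricted to the union of the pieces in $\mathcal P'$, i.e. $K^u(\tilde\Lambda)=\bigcap_{n\ge 0}\psi_u^{-n}\left(\bigcup_{I\in\mathcal P'}I\right)$, and similarly for $K^s(\tilde\Lambda)$; this is exactly the hypothesis format of Lemma \ref{perdadimen}. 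Applying that lemma twice gives
\begin{equation*}
HD(K^u(\tilde\Lambda))<HD(K^u(\Lambda))\quad\text{and}\quad HD(K^s(\tilde\Lambda))<HD(K^s(\Lambda)).
\end{equation*}
Adding the two strict inequalities and using $HD(\tilde\Lambda)=HD(K^s(\tilde\Lambda))+HD(K^u(\tilde\Lambda))$ and $HD(\Lambda)=HD(K^s(\Lambda))+HD(K^u(\Lambda))$ yields $HD(\tilde\Lambda)<HD(\Lambda)$, as desired.

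The main obstacle is the bookkeeping needed to present $K^u(\tilde\Lambda)$ and $K^s(\tilde\Lambda)$ as regular Cantor sets defined by \emph{the same} expanding maps $\psi_u,\psi_s$ as $K^u(\Lambda),K^s(\Lambda)$ but with proper sub-collections of the Markov pieces: one may first have to refine the Markov partition of $\Lambda$ (replacing $\varphi$ by a power, or subdividing rectangles) so that $\tilde\Lambda$ is realized as the invariant set inside a union of pieces of that refined partition, and then check that Lemma \ref{perdadimen} still applies after refinement — refining $\mathcal P$ itself does not change $K^u(\Lambda)$ nor its Hausdorff dimension. Once this identification is set up, the rest is the immediate two-fold application of Lemma \ref{perdadimen} and the additivity of Hausdorff dimension across stable and unstable directions for (sub)horseshoes of surface diffeomorphisms.
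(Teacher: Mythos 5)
Your argument is essentially the paper's own proof: refine the Markov partition so that a proper sub-collection $\mathcal P'\subsetneq\mathcal P$ is a Markov partition for $\tilde\Lambda$, apply Lemma \ref{perdadimen} to the expanding maps $\psi_u$ and $\psi_s$ defining the unstable and stable Cantor sets, and conclude via $HD(\tilde\Lambda)=HD(K^s(\tilde\Lambda))+HD(K^u(\tilde\Lambda))<HD(K^s(\Lambda))+HD(K^u(\Lambda))=HD(\Lambda)$. The proposal is correct and takes the same route, only with more explicit bookkeeping about the refinement step.
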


\begin{proof}
    Refine the original Markov partition $\mathcal{P}$ of $\Lambda$ in such a way that some $\mathcal{P}^{'} \subset \mathcal{P}$,  $\mathcal{P}^{'} \neq \mathcal{P}$ is a Markov partition for $\tilde{\Lambda}$. Use the lemma \ref{perdadimen} with the maps $\psi_s$ and $\psi_u$ that define the stable and unstable Cantor sets, in order to obtain 
    $$HD(\tilde{\Lambda})=HD(K^s(\tilde{\Lambda}))+HD(K^u(\tilde{\Lambda}))<HD(K^s(\Lambda))+HD(K^u(\Lambda))=HD(\Lambda).$$
\end{proof}
Given any $t<\eta$, take $n\in \mathbb{N}$ big enough such that $t<t_n$. Now, as $\max f|_{\Lambda^n}<t_{n+1}$ and $t_{n+1}<\max f|_{\Lambda^{n+1}}$ then $\Lambda^n$ is a proper subhorseshoe of $\Lambda^{n+1}$, therefore
\begin{eqnarray*}
 HD(k^{-1}(-\infty,t])= \frac{1}{2}HD(\Lambda_t) &\leq& \frac{1}{2}HD(\Lambda^n)<\frac{1}{2}HD(\Lambda^{n+1})\\ &\leq& 
\frac{1}{2}HD(\Lambda_{t_{n+2}})\leq \frac{1}{2}HD(\Lambda_{\eta})
\\&=&HD(k^{-1}(-\infty,\eta]).   
\end{eqnarray*}
Which proves the first item of the second part of the theorem in this case.

As $m\geq 1$ was arbitrary, we have the result for $\eta \in (2+ [0;\overline{1}]+[0;1,3,\overline{1,4}],\infty)\cap \overline{T}=(3.4109...,\infty)\cap \overline{T}$ which is accumulated from the left by points of $T$. 

For $\eta \in (t^*_1,3.4109...]\cap \overline{T}$ accumulated from the left by points of $T$, consider the horseshoe $\Lambda=\Lambda(2)$ (note that $\max f|_{\Lambda(2)}=\sqrt{12}>3.4109...$). As before, given $t \in (t^*_1,\eta)\cap T$, $0<\epsilon<\eta-t$ and $\delta>0$ we consider the set 
$$\tilde{P}(t,\epsilon)=\bigcup\limits_{\substack{x\in \mathcal{X}: \ \tilde{\Lambda}_x \ is\\ subhorseshoe }}\tilde{\Lambda}_x= \bigcup \limits_{i\in \mathcal{I}} \tilde{\Lambda}_i \cup \bigcup \limits_{i\in \mathcal{J}} \tilde{\Lambda}_j$$
where for $i\in \mathcal{I}$, $\tilde{\Lambda}_i$ connects with $\xi$ before $t+\epsilon/2+\delta$ and for $j\in \mathcal{J}$, $\tilde{\Lambda}_j$ doesn't connect with $\xi$ before $t+\epsilon/2+\delta$. One more time, given $j\in \mathcal{J}$, we will suppose that there is no $x\in W^u(\tilde{\Lambda}_j)\cap W^s(\xi)$ with $m_{\varphi,f}(x)\leq t+\epsilon/2+\delta/2$.

Following the above procedure, given $k\in \mathbb{N}$ large enough we construct the coverings $\mathcal{C}_k$ of $K^u(\tilde{\Lambda}_j)$ in such a way that given $I(a_0; a_1, \dots,a_k)\in \mathcal{C}_k$, if $(a_0, a_1, \dots,a_k)$ has continuations with forced first letter $a_{k+1}$ we replace the interval $I(a_0;a_1, \dots, a_k)$ with the interval $I(a_0;a_1, \dots, a_k, a_{k+1})$. 

On the other hand, if $(a_0, a_1, \dots,a_k)$ has two continuations with different initial letter, say $ \gamma_{k+1}=(1, 
a_{k+2},\dots)$ and $\beta_{k+1}=(2, a^*_{k+2},\dots)$. Take $\alpha=(\alpha_n)_{n\in \mathbb{Z}}\in \Pi(\tilde{\Lambda}_j)$ and $\tilde{\alpha}=(\tilde{\alpha}_n)_{n\in \mathbb{Z}}\in \Pi(\tilde{\Lambda}_j)$, such that $\alpha=(\dots,\alpha_{-2},\alpha_{-1};a_0, a_1, \dots,a_k,\gamma_{k+1})$ and $\tilde{\alpha}=(\dots,\tilde{\alpha}_{-2},\tilde{\alpha}_{-1};a_0, a_1, \dots,a_k,\beta_{k+1})$. We claim that $a_{k+2}=1$ because in other case by lemma \ref{lemao}, as $[0;\beta_{k+1}]<[0;\overline{1}]<[0;\gamma_{k+1}]$, we would have for all $j\leq k$
\begin{eqnarray*}
    \lambda_0(\sigma^j(\dots,\tilde{\alpha}_{-2},\tilde{\alpha}_{-1};\tilde{\alpha}_{0},\dots ,\tilde{\alpha}_{k},\overline{1}))<  t+\epsilon/2+\delta/2,
\end{eqnarray*}
and for $j\geq k+1$
$$\lambda_0(\sigma^j(\dots,\tilde{\alpha}_{-2},\tilde{\alpha}_{-1};\tilde{\alpha}_{0},\dots ,\tilde{\alpha}_{k},\overline{1}))<3 < t+\epsilon/2.$$
Then for $x=\Pi^{-1}((\dots,\tilde{\alpha}_{-2},\tilde{\alpha}_{-1};\tilde{\alpha}_{0},\dots ,\tilde{\alpha}_{k},i,\overline{1}))$ one would get the contradiction
$$x\in W^u(\tilde{\Lambda}_j)\cap W^s(\xi)\ \mbox{and}\ m_{\varphi,f}(x)\leq t+\epsilon/2+\delta/2.$$

In a similar way, we must have $a^*_{k+2}=2$ because if $a^*_{k+2}=1$, then $[0;\beta_{k+1}]<[0;2,2,\overline{1}]<[0;\gamma_{k+1}]$ and by lemma \ref{lemao} we would have for all $j\leq k$
\begin{eqnarray*}
    \lambda_0(\sigma^j(\dots,\tilde{\alpha}_{-2},\tilde{\alpha}_{-1};\tilde{\alpha}_{0},\dots ,\tilde{\alpha}_{k},2,2, \overline{1}))<t+\epsilon/2+\delta/2,
\end{eqnarray*}
for $j=k+1$,
\begin{eqnarray*}
\lambda_0(\sigma^j(\dots,\tilde{\alpha}_{-2},\tilde{\alpha}_{-1};\tilde{\alpha}_{0},\dots ,\tilde{\alpha}_{k},2,2,\overline{1}))&=&[0;\tilde{\alpha}_{k},\dots, \tilde{\alpha}_{0},\tilde{\alpha}_{-1}, \dots ]+2+\\&& [0;2,\overline{1}]\\&<&[0;\tilde{\alpha}_{k},\dots, \tilde{\alpha}_{0},\tilde{\alpha}_{-1}, \dots ]+a^*_{k+1}\\ && +[0;a^*_{k+2},a^*_{k+3}, \dots]\\&=& \lambda_0(\sigma^{k+1}(\dots,\tilde{\alpha}_{-1};\tilde{\alpha}_{0},\dots ,\tilde{\alpha}_{k},\beta_{k+1})) \\&\leq &m(\dots ,\tilde{\alpha}_{-1};\tilde{\alpha}_{0},\dots ,\tilde{\alpha}_{k},\beta_{k+1})\\&\leq&t+\epsilon/2  
\end{eqnarray*}
and for $j> k+1$, 
$$\lambda_0(\sigma^j(\dots,\tilde{\alpha}_{-2},\tilde{\alpha}_{-1};\tilde{\alpha}_{0},\dots ,\tilde{\alpha}_{k},2,2,\overline{1}))< 2+ [0;\overline{1}]+[0;2,\overline{2,1}]=3,0406...< t+\epsilon/2$$
then taking $x=\Pi^{-1}((\dots,\tilde{\alpha}_{-2},\tilde{\alpha}_{-1};\tilde{\alpha}_{0},\dots ,\tilde{\alpha}_{k},2,2,\overline{1}))$ one would have
$$x\in W^u(\tilde{\Lambda}_j)\cap W^s(\xi)\ \mbox{and}\ m_{\varphi,f}(x)\leq t+\epsilon/2+\delta/2$$
that is again a contradiction.

In particular, in this case, we can refine the cover $\mathcal{C}_k$ by replacing the interval $I(a_0; a_1, \dots,a_k)$ with the intervals $I(a_0;a_1, \dots, a_k,1,1)$ and $I(a_0;a_1, \dots, a_k,2,2)$. 

By lemma \ref{lemao} we have the inequality

\begin{eqnarray*}
&& \left(\frac{\abs{I(a_1, \dots, a_k,1,1)}}{\abs{I(a_1, \dots, a_k)}}\right)^{0.49}+\left(\frac{\abs{I(a_1, \dots, a_k,2,2)}}{\abs{I(a_1, \dots, a_k)}}\right)^{0.49} = \\
 && \left(\frac{1+r}{(2+r)(3+2r)}\right) ^{0.49}+ \left(\frac{1+r}{(5+2r)(7+3r)}\right) ^{0.49}< \\
 && \left(\frac{2}{2\times 3}\right) ^{0.49}+  \left(\frac{2}{5\times 7}\right) ^{0.49} < 0.9 \\
\end{eqnarray*}
that let us conclude that $HD(\tilde{\Lambda}_j)<0.99$ again. The rest of the proof follows the same lines as the previous one.

Finally, if $\eta \in \overline{T}$ is accumulated from the right by points of $T$, as before, we can consider the horseshoe $\Lambda=\Lambda(N)$, where $N=2$ if $\eta\le 2+[0;\overline{1}]+[0;1,3,\overline{1,4}]$ and $N=\max\{4,\lfloor \eta \rfloor\}$ otherwise. Take any strictly decreasing sequence $\{t_n\}_{n\geq 1}$ of points of $T$ such that $\lim\limits_{n\rightarrow\infty}t_n=\eta$ and $t_1<\max f|_{\Lambda}$, also $\epsilon>0$ small enough such that $HD(\Lambda_{\eta-\epsilon})>0.99$ and take any $t_0\in(\eta-\epsilon,\eta)$. The techniques we developed allow us construct then, a sequence $\{\Lambda^n\}_{n\geq0}$ of subhorseshoes of $\Lambda$ with the following properties

\begin{enumerate}
    \item $\max f|_{\Lambda^0}<\eta$
    \item $\max f|_{\Lambda^1}<\max f|_{\Lambda}$
    \item $t_{n+1}<\max f|_{\Lambda^{n+1}}<t_n$,\ \ $\forall n\geq 1$
    \item $HD(\Lambda_{t_n})\leq HD(\Lambda^n)$,\ \ $\forall n\geq 0$
    \item $\Lambda^0\subset\Lambda^{n+1}\subset \Lambda^n$, \ \ $\forall n\geq 1.$
     \end{enumerate}
Therefore, we can define a map
\begin{eqnarray*}
  \theta:K^u(\Lambda^0) &\rightarrow& k^{-1}(\eta) \\
   a &\rightarrow& \theta(a)
\end{eqnarray*}
given by
$$\theta(a)=[a_0;a_1,\dots,a_{s(1)!},h_{1}, a^{(1)},h_{2},a^{(2)},\dots ,h_{n},a^{(n)},h_{n+1}, \dots]$$
where
$$a=[a_0;a_1,a_2,\dots ]=[a_0;a_1,\dots,a_{s(1)!},a^{(1)}, a^{(2)},\dots ,a^{(n)}, \dots]$$
and the sequences $\{s(n)\}_{n\geq1}$ and $\{h_n\}_{n\geq1}$ are defined as before and are such that $(a^{(n)},h_{n+1},a^{(n+1)},h_{n+2}, \dots)$ appears in the kneading sequence of some point of $\Lambda^{n+1}$.

It is easy to see using the construction of $\theta$ that for every $a\in K^u(\Lambda^0)$, $k(\theta(a))=\eta$ and arguing as before, that $\theta$ is
a homeomorphism with H\"older inverse whose exponent is arbitrarily close to one. That implies that $HD(k^{-1}(-\infty,\eta])=HD(k^{-1}(\eta)).$

For the second part, corollary \ref{perdadimen2} implies, one more time that $HD(\Lambda^{n+1})<HD(\Lambda^n)$, for $n\geq 1$ and then that $HD(k^{-1}(-\infty,\eta])<HD(k^{-1}(-\infty,t])$, $\forall t>\eta$, as we wanted to show.

\appendix 
\section {Hyperbolic sets of finite type} \label{appendix}

Given a horseshoe $\Lambda$, we know that there is a subshift of finite type $\Sigma_{\mathcal{B}}\subset \Sigma_{\mathcal{A}}$ and a homeomorphism $\Pi:\Lambda \rightarrow \Sigma_{\mathcal{B}}$ such that $\varphi\circ \Pi=\Pi\circ \sigma$, as explained before. Take a finite collection $X$ of finite admissible words $\alpha=(a_0,a_1,\dots,a_n)$, we said that the maximal invariant set 
$$M(X)=\bigcap \limits_{n \in \mathbb{Z}} \varphi ^{-n}(\bigcup \limits_{\alpha \in X}  R(\alpha;0))$$ 
is a \textit{hyperbolic set of finite type}. Even more, it is said to be a \textit{subhorseshoe} of $\Lambda$ if it is nonempty and $\varphi|_{M(X)}$ is  transitive. Observe that a subhorseshoe need not be a horseshoe; indeed, it could be a periodic orbit in which case it will be called a trivial subhorseshoe.

By definition, hyperbolic sets of finite type have local product structure. In fact, any hyperbolic set of finite type is a locally maximal invariant set of a neighborhood of a finite number of elements of some Markov partition of $\Lambda$:


If $X$ is as before and $n(X)=\max \limits_{\alpha \in X} \abs{\alpha}$, then the set $\Tilde{X}$ of admissible words $\Tilde{\alpha}=(a_{-n(X)}, \dots , a_0, \dots, a_{n(X)})$ such that $\Tilde{\alpha}=\alpha_1\alpha_2\alpha_3$ where the words $\alpha_1, \alpha_2, \alpha_3$ are admissible and for some $n$, $\alpha_2=(a_0,a_1,\dots,a_n)\in X$, satisfies 
$$M(X)=\bigcap \limits_{n \in \mathbb{Z}} \varphi ^{-n}(\bigcup \limits_{\tilde{\alpha} \in \tilde{X}}  R(\alpha;0)).$$ 

Suppose then, without loss of generality, that $X\subset \mathcal{A}$. We set $A=A(X)$ as the matrix with entries $A_{\alpha,\beta}$ defined by $A_{\alpha,\beta}=1$ if the letters of $X$, $\alpha$ and $\beta$ are such that $\alpha \beta$ is admissible and $A_{\alpha,\beta}=0$ otherwise.


Let $\Sigma_{X}=\left\{\underline{\alpha}=(\alpha_{n})_{n\in \mathbb{Z}}:\alpha_{n}\in X \ \text{for all} \ n\in \mathbb{Z}\right\}$ equipped with the usual shift $\sigma:\Sigma_{X}\to\Sigma_{X}$. Consider $\Sigma_A=\{\underline{\alpha}=(\alpha_n)_{n\in \mathbb{Z}}\in \Sigma_{X}: A_{\alpha_n,\alpha_{n+1}}=1 \}$, this set is closed and $\sigma$-invariant subspace of $\Sigma_{X}$. The pair $(\Sigma_{A},\sigma)$ is the two-side subshift of finite type associated to $A$ in the alphabet $X$.


Given $\alpha, \beta \in X$, we said that $\alpha$ is related to $\beta$ if for some $k,\ell >0$, $(A^k)_{\alpha, \beta}>0$ and $(A^{\ell})_{\beta, \alpha}>0$. This corresponds to having a path from $\alpha$ to $\beta$ and a path from $\beta$ to $\alpha$ in the graph $G_A$ that have as vertex set, the set $X$ and as transition matrix, the matrix $A$. We said $\alpha \in X$ is a transient state if $\alpha$ is not related to itself, i.e there is no path from it to itself. In this context, the set $\Sigma_{A}$ can be identified as the set of infinite paths in the graph $G_A$.

We said that $A$ is irreducible if for every $\alpha, \beta \in X$ there exists some $\ell \in \mathbb{N}$ with $(A^{\ell})_{\alpha, \beta}>0$. Equivalently, the matrix $A$ is irreducible when it is possible to connect by a path each pair of vertex in the graph $G_A$.

Using the above relation, we can divide $X$ into transients states and a collection of disjoint classes that determine some submatrices of $A$. More precisely, it follows from theorem 1.3.10 of \cite{BK} that there is a permutation matrix $P$ such that 
$$
P^{-1}AP=
\begin{bmatrix} 
	A_1 & * & *& \dots \ * \\
	0 & A_2 & *& \dots \ * \\
	0 & 0 & A_3 & \dots \ * \\
    \vdots & \vdots & \vdots  & \ddots \ \vdots \\
    
    0 & 0 & 0  & \dots \ A_m \\
	\end{bmatrix}
	\quad 
 $$ 
 where each $A_i$ is a transition irreducible matrix that we call of transitive component of $A$ or the one by one matrix $[0]$ corresponding to a transient state. If $A_{i_1}, \dots, A_{i_r}$ are the transitive components of $A$,   we said that $\Sigma_{A_{i_1}}, \dots, \Sigma_{A_{i_r}}\subset \Sigma_A$ are the transitive components of $\Sigma_A$. 
 
 Next, we write observation 5.1.2 of \cite{BK}
\begin{proposition}\label{irreducible}
    For $x\in \Sigma_A$, the following holds:
    \begin{itemize}
        \item the positive and negative limit sets of x, $\omega(x)$ and $\alpha(x)$ are each contained in a transitive component of $\Sigma_A$,
        \item x is nonwandering if and only if it belongs to a transitive component of $\Sigma_A$,
        \item x is nonwandering if and only if $\omega(x)\cup \alpha(x)$ is a subset of some transitive component of $\Sigma_A$.
    \end{itemize}
\end{proposition}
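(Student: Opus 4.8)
Since this is Observation~5.1.2 of \cite{BK}, in the final text one may simply cite it; here is the argument I have in mind. The plan is to work entirely inside the graph $G_A$, whose vertices are the letters of $X$ and whose strongly connected components — the classes of the relation ``$\alpha$ related to $\beta$'' together with the transient singletons — are the vertex sets underlying the transitive components $\Sigma_{A_i}$ of $\Sigma_A$. The single combinatorial fact behind all three items is: if $(x_n)_{n\in\mathbb{Z}}\in\Sigma_A$ and $C_n$ is the component of $x_n$, then the edge $x_n\to x_{n+1}$ forces $C_n$ to reach $C_{n+1}$, and once $C_n\neq C_{n+1}$ one can never come back; since $X$ is finite and the graph obtained by collapsing each component is acyclic, there are indices $N^+,N^-$ and components $C^+,C^-$ with $C_n=C^+$ for all $n\geq N^+$ and $C_n=C^-$ for all $n\leq -N^-$, and each of $C^+,C^-$ contains a loop (if, say, $C^+=\{c\}$, then $x_n=c$ for $n\geq N^+$ forces $A_{cc}=1$), hence is a transitive component. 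For the first item I would then observe that every $y\in\omega(x)$ has all of its coordinates in $C^+$: for each $m$, a large central block of $\sigma^{n_j}(x)$ equals the corresponding block of $y$ for some $n_j\to\infty$, so $y_m=x_{n_j+m}\in C^+$ once $n_j+m\geq N^+$; as $\Sigma_A$ is closed this gives $y\in\Sigma_{A|_{C^+}}$, the transitive component indexed by $C^+$. The statement for $\alpha(x)$ and $C^-$ is identical with $n_j\to-\infty$.

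For the second item, the ``if'' direction is the classical fact that every point of an irreducible subshift of finite type is nonwandering: given a basic neighbourhood $U=\{z\in\Sigma_A: z_j=x_j\text{ for }|j|\leq k\}$ of a point $x\in\Sigma_{A_i}$, irreducibility of $A_i$ provides an admissible word inside the component joining $x_k$ to $x_{-k}$, and the periodic point obtained by concatenating $x_{-k}\cdots x_k$ with that word lies in $U$ and is fixed by a positive power of $\sigma$, so $\sigma^n(U)\cap U\neq\emptyset$. For ``only if'' I would test the nonwandering property against two minimal cylinders. Applied to $\{z\in\Sigma_A:z_0=x_0\}$ it yields $z\in\Sigma_A$ and $n\geq1$ with $z_0=z_n=x_0$, i.e.\ a loop through $x_0$ in $G_A$, and by $\sigma$-invariance of the nonwandering set this holds for every $x_n$; applied to $\{z\in\Sigma_A:z_0=x_0,\ z_1=x_1\}$ it yields $z$ and $n\geq1$ with $z_0=z_n=x_0$ and $z_1=z_{n+1}=x_1$, and either $n=1$, which forces $x_0=x_1$, or $n\geq2$, in which case $z_1\cdots z_n$ is a path from $x_1$ to $x_0$ that, together with the edge $x_0\to x_1$, places $x_0$ and $x_1$ in one component. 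Hence consecutive letters of $x$ always lie in one component, so all letters of $x$ lie in a single component $C$, which contains a loop and is therefore a transitive component, and $x\in\Sigma_{A|_C}$.

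For the third item, ``only if'' is immediate from the second: if $x\in\Sigma_{A_i}$, then $\Sigma_{A_i}$ is closed and $\sigma$-invariant, so $\omega(x)\cup\alpha(x)\subset\Sigma_{A_i}$. For ``if'', assume $\omega(x)\cup\alpha(x)\subset\Sigma_{A_i}=\Sigma_{A|_C}$ for some component $C$. By the first item $\omega(x)\subset\Sigma_{C^+}$ and $\alpha(x)\subset\Sigma_{C^-}$; since $\omega(x),\alpha(x)$ are non-empty by compactness of $\Sigma_A$ and distinct components are disjoint, this forces $C^+=C^-=C$. Hence $x_n\in C$ for all $n\geq N^+$ and all $n\leq -N^-$, and for each of the finitely many remaining indices $j$ the path $x_{-N^-}\to\cdots\to x_j$ shows $x_j$ is reached from $C$ while $x_j\to\cdots\to x_{N^+}$ shows $x_j$ reaches $C$, so $x_j$ lies in the same component as the vertices of $C$, that is $x_j\in C$. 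Therefore every letter of $x$ is in $C$, so $x\in\Sigma_{A|_C}=\Sigma_{A_i}$, and the ``if'' part of the second item gives that $x$ is nonwandering.

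The only point requiring care is the return-time bookkeeping in the ``only if'' part of the second item: one must not assume the return time $n$ is large enough for the extracted subwords to be pairwise disjoint, which is exactly why I test against the one- and two-coordinate cylinders and treat $n=1$ as a separate trivial case. With that dealt with, everything else is finite-graph combinatorics together with compactness of $\Sigma_A$.
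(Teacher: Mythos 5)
Your proof is correct. Note, however, that the paper does not prove this statement at all: it is quoted verbatim as Observation 5.1.2 of the cited book of Kitchens \cite{BK}, so in the final text a citation suffices and your argument plays the role of a self-contained substitute. What you supply is the standard finite-graph proof: the condensation of $G_A$ is acyclic, so along any $x\in\Sigma_A$ the strongly connected component of $x_n$ stabilizes as $n\to\pm\infty$ to classes $C^{\pm}$ which you correctly verify contain loops (the singleton case forcing $A_{cc}=1$ is the right check), giving the first item; the periodic-point construction inside an irreducible block gives the ``if'' of the second item, and testing the nonwandering condition on the two-coordinate cylinder, with the $n=1$ return treated separately, gives the ``only if''; the third item then follows by disjointness of distinct transitive components plus the reachability sandwich $x_{-N^-}\to x_j\to x_{N^+}$. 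All of these steps are sound and match what one finds in Kitchens. One small wording slip worth fixing: in your opening sentence you say the classes \emph{together with the transient singletons} are ``the vertex sets underlying the transitive components''; transient singletons (no self-loop) do not underlie transitive components, they correspond to the $1\times 1$ blocks $[0]$. Your later reasoning is unaffected, since you never use that phrase and instead explicitly exhibit a loop in $C^{\pm}$ (and in the class of $x_0$) before declaring it a transitive component.
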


By proposition \ref{irreducible}, if some $x\in \Sigma_A$ doesn't belong to any transitive component of $\Sigma_A$, then x is nonwandering and there are different transitive components $\Sigma_{A_{i_a}}$ and $\Sigma_{A_{i_b}}$ such that $\omega(x) \subset \Sigma_{A_{i_a}}$ and $\alpha(x) \subset \Sigma_{A_{i_b}}$.

\begin{definition}
Any $\tau \subset M(X)$ for which there are two different subhorseshoes $\Lambda_1$ and $\Lambda_2$ of $\Lambda$ such that 
$$\tau=\{x\in M(X):\ \omega(x)\subset \Lambda_1\ \text{and}\ \alpha(x)\subset \Lambda_2  \}$$
will be called a transient set or transient component of $M(X)$.
\end{definition}

Note that by the local product structure, given a transient set $\tau$ as before,
\begin{equation}
    HD(\tau)=HD(K^s(\Lambda_2))+HD(K^u(\Lambda_1)).
\end{equation}
From the last discussion, we can recover a decomposition of the set $\Pi(M(X))$ and then for the set $M(X)$

\begin{proposition}\label{appendix}
Any hyperbolic set of finite type $M(X)$, associated with a finite collection of finite admissible words $X$, can be written as
$$M(X)=\bigcup \limits_{i\in \mathcal{I}} \tilde{\Lambda}_i $$ 
where $\mathcal{I}$ is a finite set of indices (that may be empty) and for $i\in \mathcal{I}$,\ $\tilde{\Lambda}_i$ is a subhorseshoe or a transient set.
\end{proposition}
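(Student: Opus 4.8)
The plan is to read the decomposition off the symbolic picture developed above. By the reduction already carried out (replacing $X$ by the enlarged collection $\tilde X$, so that without loss of generality $X\subset\mathcal{A}$), the homeomorphism $\Pi$ restricts to a conjugacy between $\varphi|_{M(X)}$ and the two-sided subshift $\sigma|_{\Sigma_A}$, where $A=A(X)$ is the transition matrix on the alphabet $X$. First I would invoke Theorem 1.3.10 of \cite{BK}, recalled above, to obtain the transitive components $\Sigma_{A_{i_1}},\dots,\Sigma_{A_{i_r}}$ of $\Sigma_A$ (the remaining states of $X$ being transient). For each $1\le s\le r$ I would set $\tilde\Lambda_{i_s}:=\Pi^{-1}(\Sigma_{A_{i_s}})$; since $A_{i_s}$ is irreducible, $\sigma|_{\Sigma_{A_{i_s}}}$ is topologically transitive, so $\tilde\Lambda_{i_s}$ is a nonempty compact $\varphi$-invariant hyperbolic subset of $\Lambda$ on which $\varphi$ is transitive, i.e.\ a subhorseshoe of $\Lambda$ (a trivial one, a periodic orbit, when $A_{i_s}=[1]$).

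Next I would dispose of the points of $M(X)$ lying outside every $\tilde\Lambda_{i_s}$. If $x\in M(X)$ belongs to no transitive component, then, reading the second and third bullets of Proposition \ref{irreducible} together, $x$ must be wandering; and then, by the first and third bullets, its limit sets satisfy $\omega(x)\subset\tilde\Lambda_{i_a}$ and $\alpha(x)\subset\tilde\Lambda_{i_b}$ for two \emph{distinct} transitive-component indices $i_a,i_b$. So for each ordered pair of distinct such indices I would put
$$\tau_{i_a,i_b}:=\{x\in M(X):\ \omega(x)\subset\tilde\Lambda_{i_a}\ \text{and}\ \alpha(x)\subset\tilde\Lambda_{i_b}\},$$
which, when nonempty, is exactly a transient component in the sense of the definition above (with $\Lambda_1=\tilde\Lambda_{i_a}$, $\Lambda_2=\tilde\Lambda_{i_b}$). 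The previous step guarantees that every point of $M(X)$ lies in some $\tilde\Lambda_{i_s}$ or in some $\tau_{i_a,i_b}$, so
$$M(X)=\Big(\bigcup_{s=1}^{r}\tilde\Lambda_{i_s}\Big)\cup\Big(\bigcup_{i_a\neq i_b}\tau_{i_a,i_b}\Big),$$
a finite union; discarding the empty pieces and re-indexing the survivors by a finite set $\mathcal{I}$, necessarily empty when $M(X)=\emptyset$, gives the claimed form.

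Most of this is routine bookkeeping: checking that the symbolic reduction to $X\subset\mathcal{A}$ is compatible with $\Pi$ and with forward and backward limit sets, that irreducibility of a diagonal block genuinely yields topological transitivity of $\varphi$ on the corresponding subhorseshoe, and that the sets $\tau_{i_a,i_b}$ literally meet the formal definition of a transient set. The one spot that needs a little care — the main, if mild, obstacle — is the implication that a point belonging to no transitive component has its $\omega$- and $\alpha$-limit sets in \emph{different} components; this is what prevents $\tau_{i_a,i_b}$ from being a disguised portion of a single subhorseshoe, and it is obtained by combining the equivalences of Proposition \ref{irreducible} rather than from either one in isolation.
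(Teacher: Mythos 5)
Your proposal is correct and follows essentially the same route as the paper: the paper gives no separate formal proof, but the appendix discussion preceding the proposition (reduction to $X\subset\mathcal{A}$, the block-triangular form of $A$ from Theorem 1.3.10 of \cite{BK}, transitivity of the irreducible blocks, and the use of Proposition \ref{irreducible} to place wandering points between two distinct transitive components) is exactly the argument you reconstructed. Your reading of the bullets of Proposition \ref{irreducible} — that a point outside every transitive component is wandering, with $\omega$- and $\alpha$-limit sets in distinct components, hence lies in a transient set in the sense of the paper's definition — is precisely the intended step (and correctly fixes the paper's typo ``nonwandering'' there).
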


\end{document}